\documentclass[12pt]{article}
\usepackage{amsfonts}

\usepackage[all]{xy}
\usepackage{amsmath}


\def\squarebox#1{\hbox to #1{\hfill\vbox to #1{\vfill}}}
\newcommand{\qed}{\hspace*{\fill}
\vbox{\hrule\hbox{\vrule\squarebox{.667em}\vrule}\hrule}\smallskip}
\newtheorem{teorema}{Theorem}[section]
\newtheorem{lema}[teorema]{Lemma}
\newtheorem{corolario}[teorema]{Corollary}
\newtheorem{proposicao}[teorema]{Proposition}

\newenvironment{proof}{\noindent {\bf Proof:}}{\hfill $\qed $ \newline}

\begin{document}

\title{The Topological Entropy of \\ Endomorphisms of Lie Groups}
\author{Mauro Patr\~{a}o\footnote{
    Department of Mathematics, University of Bras\'ilia, Brazil. 
    mpatrao@mat.unb.br
  }}
\maketitle

\begin{abstract}
In this paper, we determine the topological entropy $h(\phi)$ of a continuous endomorphism $\phi$ of a Lie group $G$. This computation is a classical topic in ergodic theory which seemed to have long been solved. But, when $G$ is noncompact, the well known Bowen's formula for the entropy $h_{d}(\phi)$ associated to a left invariant distance $d$ just provides an upper bound to $h(\phi)$, which is characterized by the so called variational principle. We prove that
\[
h\left(\phi\right) = h\left(\phi|_{T(G_\phi)}\right)
\]
where $G_\phi$ is the maximal connected subgroup of $G$ such that $\phi(G_\phi) = G_\phi$, and $T(G_\phi)$ is the maximal torus in the center of $G_\phi$. This result shows that the computation of the topological entropy of a continuous endomorphism of a Lie group reduces to the classical formula for the topological entropy of a continuous endomorphism of a torus. Our approach explores the relation between null topological entropy and the nonexistence of Li-Yorke pairs and also relies strongly on the structure theory of Lie groups.
\end{abstract}

\noindent \textit{AMS 2010 subject classification}: Primary: 37B40, 22D40; Secondary: 37A35, 22E99.

\noindent \textit{Key words:} Topological entropy, endomorphisms of Lie groups, Li-Yorke pairs.

\section{Introduction}

The computation of the topological entropy of a continuous endomorphism $\phi$ of a Lie group $G$ is a classical topic in ergodic theory which seemed to have long been solved. In \cite{bowen-endomorphism}, Bowen proved that, when $G$ is connected, the entropy of $\phi$ associated to a left invariant distance $d$ is given by
\[
h_d(\phi) = \sum_{|\lambda| > 1} \log\left|\lambda\right|
\]
where $\lambda$ are eigenvalues of the derivative $d\phi_1$, generalizing a result which was already known when $G$ is a torus. When $T: X \to X$ is a continuous map and $X$ is a compact metric space, Bowen, Dinaburg, and Goodman (see \cite{bowen-endomorphism,dinaburg,goodman}) proved the so-called variational principle 
\[
\sup_{\mu} h_{\mu}(T) = h(T) = h_d(T)
\]
where $h_\mu(T)$ is the entropy of $T$ associated to the probability measure $\mu$, introduced by Kolmogorov and Sinai in \cite{sinai}, $h(T)$ is the topological entropy of $T$, introduced by Adler, Konheim and MacAndrew  in \cite{akm}, while $h_d(T)$ is the entropy of $T$ associated to the distance $d$, introduced by Bowen in \cite{bowen-endomorphism} and by Dinaburg in \cite{dinaburg}. Hence, when $G$ is compact and connected, Bowen's formula in fact provides the topological entropy of an endomorphism of $G$. For example, consider the continuous endomorphism $\phi$ of the two dimensional torus, given by $\phi(z,w) = (z^2,w^2)$, where $z,w \in S^1 \subset \mathbb{C}^*$. Since $d\phi_1 = \left(\begin{smallmatrix}2&0\\0&2\end{smallmatrix}\right)$, Bowen's formula implies that $h(\phi) = h_d(\phi) = 2\log(2)$. 

But when $G$ is not compact, Bowen's formula just provides an upper bound to the topological entropy. When $X$ is a separable locally compact metric space, it was proved, first in \cite{patrao:entropia}, assuming that $T$ is proper, and then in \cite{caldas-patrao:entropia} for every continuous map $T$, that
\[
\sup_{\mu} h_{\mu}(T) = h(T) = \min_d h_d(T)
\]
where $h(T)$ is a natural generalization of the topological entropy for locally compact metric spaces. Using this generalization of the variational principle and the Poincar\'e Lemma, it follows that $h(T) = h\left(T|_{\overline{\mathcal{R}_T}}\right)$, where $\mathcal{R}_T$ is the recurrent set of $T$. For example, consider the endomorphism $\phi = \left(\begin{smallmatrix}2&0\\0&2\end{smallmatrix}\right)$ of $\mathbb{R}^2$. It is easy to see that $\mathcal{R}_\phi = 0$, which implies that $h(\phi) = h\left(\phi|_0\right) = 0$. On the other hand, since $d\phi_1 = \left(\begin{smallmatrix}2&0\\0&2\end{smallmatrix}\right)$, Bowen's formula implies that $h_d(\phi) = 2\log(2) > 0$, which is just an upper bound to the topological entropy. As another example, consider the continuous endomorphism $\phi$ of $\mathbb{C}^*$, given by $\phi(z) = z^2$. It is easy to see that $\mathcal{R}_\phi \subset S^1$, which implies again that $h\left(\phi|_{S^1}\right) = \log(2)$. On the other hand, since $d\phi_1 = \left(\begin{smallmatrix}2&0\\0&2\end{smallmatrix}\right)$, Bowen's formula implies that $h_d(\phi) = 2\log(2) > \log(2)$, which is again just an upper bound to the topological entropy. Since the corresponding maximal tori in the respective centers are given by $T(\mathbb{R}^2) = 0$ and $T(\mathbb{C}^*) = S^1$, these examples led us to conjecture that $h\left(\phi\right) = h\left(\phi|_{T(G)}\right)$ for a continuous surjective endomorphism $\phi$ of a connected Lie group $G$, where $T(G)$ is the maximal torus in the center of $G$. In \cite{caldas-patrao:endomorfismos}, this conjecture was proved when $G$ is nilpotent and when $G$ is reductive. In \cite{caldas-patrao:entropia}, the surjectivity hypothesis was removed and it was proved that, for every continuous endomorphism $\phi$ of a connected nilpotent or reductive Lie group $G$, $h\left(\phi\right) = h\left(\phi|_{T(G_\phi)}\right)$, where $G_\phi$ is the maximal subgroup of $G$ such that $\phi(G_\phi) = G_\phi$.

The main result of the present paper is that
\[
h\left(\phi\right) = h\left(\phi|_{T(G_\phi)}\right)
\]
where $G_\phi$ is the maximal connected subgroup of $G$ such that $\phi(G_\phi) = G_\phi$, and $T(G_\phi)$ is the maximal torus in the center of $G_\phi$ (see Theorem \ref{theo-entropia-endomorphism}). This result solves the above conjecture in full generality and shows that the computation of the topological entropy of a continuous endomorphism of a Lie group reduces to the classical formula for the topological entropy of a continuous endomorphism of a torus. The example of minimal dimension where the previous results do not apply is the group of the orientation preserving isometries of the plane, which is a solvable group isomorphic to semidirect product of $\mathbb{R}^2$ by $S^1$. Since the center of this group is trivial, the entropy of every continuous and sujective endomorphism vanishes. 

The main difficulties in order to prove this result are the following. First it remains an open question if it is possible to fully generalize to locally compact spaces a result proved by Bowen in \cite{bowen-endomorphism} about the relation of topological entropies of maps in a compact principal bundle. However we are able to prove a partial generalization of this result, assuming that the maps are proper and that the structural group is compact, and obtaining only one of the inequalities (see Proposition \ref{prop-bowen-generalizado}). Hence we have to explore the relation between null topological entropy and the nonexistence of Li-Yorke pairs. It turned out that, when we are considering fiber bundles given by quotient of Lie groups, the existence of Li-Yorke pairs is much easier to deal with than the positivity of the topological entropy (see Proposition \ref{prop-li-yorke-G-H}). The second main difficulty is to show that, if $T(G)$ is trivial, then $\phi$ has no Li-Yorke pair (see Corollary \ref{cor-tg-trivial-no-li-yorke-pair}). The problem here is the following. When $G$ is nilpotent, if $T(G)$ is trivial, then $G$ is simply-connected, which implies that $\phi$ is conjugated by the exponential map to $d\phi_1$. But when $G$ is solvable, $\phi$ might not be conjugated to a linear map, since $G$ might not be simply-connected even when $T(G)$ is trivial. And the reason is that the center of a simply-connected solvable Lie group can be disconnected, which never happens with a simply-connected nilpotent Lie group. The proof of the main technical result here (see Theorem \ref{teo-toral-finite-order}), which shows that the induced endomorphism on the maximal torus of the quotient of a connected solvable Lie group by its simply-connected nilpotent radical has finite order, relies heavily on the so called Fundamental Theorem on the Center for connected Lie groups (see Theorem 14.2.8 of \cite{hilgert-neeb}).

The paper is organized as follows. In Section \ref{preliminaries}, we collect the main concepts and results of Lie theory which are necessary in the subsequent sections. In Section \ref{maximal}, we prove some necessary results related to the maximal torus $T(G)$ of the center of a Lie group $G$. In Section \ref{finite}, we prove the main technical result of the paper. In Section \ref{li-yorke}, we obtain some useful results about the existence and nonexistence of Li-Yorke pairs. Finally, in Section \ref{topological}, we prove the main results of this paper, about the computation of the topological entropy of endomorphisms of Lie groups.

\section{Preliminaries on Lie theory}\label{preliminaries} 

In this section, we collect the main concepts and results that are used in the next sections. Given a Lie group $G$ with Lie algebra $\mathfrak{g}$, the connected component of the identity of $G$ is denoted by $G_0$. The center of $G$
\begin{equation}
Z(G) = \{h \in G : gh=hg, \mbox{ for all } g \in G\}
\end{equation}
is a closed normal subgroup of $G$ whose Lie algebra is the center of $\mathfrak{g}$
\begin{equation}
\mathfrak{z}(\mathfrak{g}) = \{H \in \mathfrak{g} : [H,X] = 0, \mbox{ for all } X \in \mathfrak{g}\}
\end{equation}
which is an ideal of $\mathfrak{g}$ (see Lemma 11.1.1 of \cite{hilgert-neeb}). An isomorphism between Lie groups is an isomorphism of groups which is also a diffeomorphism and, when there exists an isomorphism between the Lie groups $G$ and $H$, we denote this by $G \simeq H$. The proof following result can be found in Section 9.5 of \cite{hilgert-neeb}, specially in Theorem 9.5.4.

\begin{proposicao}
Let $G$ be a connected Lie group with Lie algebra $\mathfrak{g}$. Then there exist a simply connected Lie group $\widetilde{G}$ and a discrete subgroup $\widetilde{\Gamma}$ of its center such that $G \simeq \widetilde{G}/\widetilde{\Gamma}$. The Lie group $\widetilde{G}$ is unique up to isomorphism and has Lie algebra $\mathfrak{g}$. Each endomorphism $\phi$ of $G$ is induced by a unique endomorphism $\widetilde{\phi}$ of $\widetilde{G}$ such that $\widetilde{\phi}\left(\widetilde{\Gamma}\right) \subset \widetilde{\Gamma}$.
\end{proposicao}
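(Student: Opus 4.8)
The plan is to realize $\widetilde{G}$ as the universal covering group of $G$ and to extract every assertion from the covering-space formalism combined with the classical integration theorems relating Lie groups and Lie algebras (exactly the content of Section 9.5 of \cite{hilgert-neeb}). First I would take the universal cover $\pi\colon\widetilde{G}\to G$ as a topological space, fix a base point $\widetilde{1}\in\pi^{-1}(1)$, and use simple connectedness of $\widetilde{G}$ to lift the multiplication and inversion of $G$; this equips $\widetilde{G}$ with a unique Lie group structure for which $\pi$ is a homomorphism. Since a covering map is a local diffeomorphism, $d\pi_{\widetilde{1}}$ is an isomorphism of Lie algebras, so $\widetilde{G}$ has Lie algebra $\mathfrak{g}$. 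The kernel $\widetilde{\Gamma}=\ker\pi$ is discrete (it is a fiber of a covering) and normal; it is automatically central, because for fixed $\gamma\in\widetilde{\Gamma}$ the continuous map $g\mapsto g\gamma g^{-1}$ takes values in the discrete set $\widetilde{\Gamma}$, hence is constant on the connected group $\widetilde{G}$, and it equals $\gamma$ at $g=\widetilde{1}$. Thus $\widetilde{\Gamma}$ is a discrete central subgroup and $G\simeq\widetilde{G}/\widetilde{\Gamma}$.

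For uniqueness up to isomorphism, suppose $\widetilde{G}'$ is another simply connected Lie group with Lie algebra $\mathfrak{g}$. Because the source is simply connected, the identity $\mathrm{id}_{\mathfrak{g}}$ integrates to homomorphisms $f\colon\widetilde{G}\to\widetilde{G}'$ and $f'\colon\widetilde{G}'\to\widetilde{G}$ with $df_{\widetilde{1}}=\mathrm{id}$ and $df'_{\widetilde{1}'}=\mathrm{id}$. Then $f'\circ f$ and $\mathrm{id}_{\widetilde{G}}$ are two homomorphisms with the same differential at the identity, hence coincide by the uniqueness part of the integration theorem; symmetrically $f\circ f'=\mathrm{id}_{\widetilde{G}'}$. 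Therefore $f$ is an isomorphism and $\widetilde{G}$ is unique up to isomorphism.

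Finally, for the lifting of endomorphisms, given an endomorphism $\phi$ of $G$ I would consider the continuous map $\phi\circ\pi\colon\widetilde{G}\to G$, which sends $\widetilde{1}$ to $1$. Since $\widetilde{G}$ is simply connected, $\phi\circ\pi$ lifts through the covering $\pi$ to a unique continuous map $\widetilde{\phi}\colon\widetilde{G}\to\widetilde{G}$ with $\pi\circ\widetilde{\phi}=\phi\circ\pi$ and $\widetilde{\phi}(\widetilde{1})=\widetilde{1}$; this lift is smooth because it is a continuous lift of a smooth map along a local diffeomorphism. To see $\widetilde{\phi}$ is a homomorphism, note that $(x,y)\mapsto\widetilde{\phi}(xy)$ and $(x,y)\mapsto\widetilde{\phi}(x)\widetilde{\phi}(y)$ are two maps $\widetilde{G}\times\widetilde{G}\to\widetilde{G}$ that both lift $(x,y)\mapsto\phi(\pi(x)\pi(y))$ along $\pi$ and agree at $(\widetilde{1},\widetilde{1})$, hence are equal by uniqueness of lifts on the connected space $\widetilde{G}\times\widetilde{G}$. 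From $\pi(\widetilde{\phi}(\widetilde{\Gamma}))=\phi(\pi(\widetilde{\Gamma}))=\{1\}$ we get $\widetilde{\phi}(\widetilde{\Gamma})\subset\widetilde{\Gamma}$, so $\widetilde{\phi}$ descends to an endomorphism of $\widetilde{G}/\widetilde{\Gamma}\simeq G$, and by construction this descended map is $\phi$. Uniqueness of $\widetilde{\phi}$ follows since any endomorphism of $\widetilde{G}$ inducing $\phi$ necessarily satisfies $\pi\circ\widetilde{\phi}=\phi\circ\pi$ and fixes $\widetilde{1}$, hence equals the unique lift. No step here is genuinely deep; the only substantial inputs are the classical existence of a Lie-group universal cover and the integrability of Lie algebra homomorphisms out of a simply connected group, and the one place that needs care is precisely the verification, via uniqueness of lifts, that the topological lift $\widetilde{\phi}$ respects the group operation.
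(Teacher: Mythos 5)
Your proof is correct. The paper itself offers no proof of this proposition; it simply cites Section 9.5 (especially Theorem 9.5.4) of Hilgert--Neeb, and the argument you give — take the topological universal cover, lift multiplication and inversion, show $\ker\pi$ is discrete and hence central by connectedness, integrate $\mathrm{id}_{\mathfrak g}$ for uniqueness, and lift $\phi\circ\pi$ through $\pi$ to get $\widetilde\phi$ with the homomorphism property checked by uniqueness of lifts on $\widetilde G\times\widetilde G$ — is precisely the standard construction carried out in that reference.
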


A consequence of this theorem is that every connected abelian Lie group is isomorphic to the product of a vector space by a torus, which is a connected and compact abelian Lie group (see Example 9.5.6 of \cite{hilgert-neeb}). In particular, the connected component $Z(G)$ of the center of $G$ is the product of a vector space by a torus denoted by $T(G)$. It also implies the following result.

\begin{proposicao}\label{torus-discrete}
If $T$ is a torus, then $T \simeq \mathbb{R}^n/\mathbb{Z}^n$ and the group of the automorphisms of $T$ is isomorphic to the discrete Lie group $\mathrm{GL}(n,\mathbb{Z})$. 
\end{proposicao}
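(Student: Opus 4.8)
The plan is to deduce both assertions from the Proposition just stated, applied to the connected abelian Lie group $T$. For the first assertion, observe that the Lie algebra $\mathfrak{t}$ of $T$ is abelian of dimension $n = \dim T$, so the simply connected Lie group with Lie algebra $\mathfrak{t}$ is $(\mathbb{R}^n,+)$; hence $T \simeq \mathbb{R}^n/\widetilde\Gamma$ for some discrete subgroup $\widetilde\Gamma$ of $\mathbb{R}^n$. Since $T$ is compact, $\widetilde\Gamma$ must be a lattice of full rank $n$, so there is $A \in \mathrm{GL}(n,\mathbb{R})$ with $\widetilde\Gamma = A\mathbb{Z}^n$. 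The linear automorphism $A^{-1}$ of $\mathbb{R}^n$ then descends to a Lie group isomorphism $\mathbb{R}^n/\widetilde\Gamma \to \mathbb{R}^n/\mathbb{Z}^n$, and therefore $T \simeq \mathbb{R}^n/\mathbb{Z}^n$.

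For the second assertion, I would identify $T$ with $\mathbb{R}^n/\mathbb{Z}^n$, so that $\widetilde T = \mathbb{R}^n$ and $\widetilde\Gamma = \mathbb{Z}^n$ in the notation of the Proposition. Each automorphism $\psi$ of $T$ is then induced by a unique endomorphism $\widetilde\psi$ of $\mathbb{R}^n$ with $\widetilde\psi(\mathbb{Z}^n) \subseteq \mathbb{Z}^n$; applying the same to $\psi^{-1}$ and using uniqueness of the lift shows that $\widetilde\psi$ is invertible with $(\widetilde\psi)^{-1} = \widetilde{\psi^{-1}}$, and in particular $\widetilde\psi(\mathbb{Z}^n) = \mathbb{Z}^n$. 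Every automorphism of the Lie group $\mathbb{R}^n$ is linear, since a continuous homomorphism of $\mathbb{R}^n$ is $\mathbb{R}$-linear; hence $\widetilde\psi \in \mathrm{GL}(n,\mathbb{R})$, and as $\widetilde\psi$ and $(\widetilde\psi)^{-1}$ both send the standard basis into $\mathbb{Z}^n$, both have integer entries, so $\widetilde\psi \in \mathrm{GL}(n,\mathbb{Z})$.

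Conversely, every $A \in \mathrm{GL}(n,\mathbb{Z})$ descends to an automorphism of $\mathbb{R}^n/\mathbb{Z}^n$, so $\psi \mapsto \widetilde\psi$ is a bijection from $\mathrm{Aut}(T)$ onto $\mathrm{GL}(n,\mathbb{Z})$; it is a group homomorphism because lifts to $\mathbb{R}^n$ are unique and hence compose correctly. This gives $\mathrm{Aut}(T) \simeq \mathrm{GL}(n,\mathbb{Z})$, which, being countable, is a discrete Lie group. The whole argument is structural and uses no input beyond the Proposition above; the only place needing a brief justification is the claim that a discrete subgroup of $\mathbb{R}^n$ with compact quotient is a full-rank lattice of the form $A\mathbb{Z}^n$, which is standard, so I do not anticipate a real obstacle.
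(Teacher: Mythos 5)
Your proof is correct and follows exactly the route the paper has in mind: the paper states the result immediately after the universal-cover proposition with the remark that ``it also implies the following result,'' and gives no further argument. You have filled in the details faithfully: compactness of $T$ forces the kernel lattice to have full rank, any two full-rank lattices in $\mathbb{R}^n$ differ by an element of $\mathrm{GL}(n,\mathbb{R})$, the uniqueness of lifts to $\widetilde{T}=\mathbb{R}^n$ makes $\psi\mapsto\widetilde{\psi}$ a well-defined group isomorphism onto the integer matrices preserving $\mathbb{Z}^n$ with integer inverse, and continuous endomorphisms of $\mathbb{R}^n$ are linear. The only point I would add, since Lemma \ref{lema-torus-normal-central} later uses continuity of a map into $\mathrm{Aut}(T)$ with the discrete topology, is a sentence noting that the natural (compact-open or Lie-group) topology on $\mathrm{Aut}(T)$ is in fact discrete, e.g.\ because an automorphism close to the identity must fix the finite set of elements of a given small order, and these generate a dense subgroup; but this is a minor topological remark, not a gap in the group-theoretic argument you were asked to supply.
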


In every Lie algebra $\mathfrak{g}$ of a Lie group $G$, there exist a unique maximal solvable ideal $\mathfrak{r}$ (see Proposition 5.4.3 of \cite{hilgert-neeb}), called the solvable radical of $\mathfrak{g}$, and a unique maximal nilpotent ideal $\mathfrak{n}$ (see Exercise 5.2.5 of \cite{hilgert-neeb}), called the nilpotent radical of $\mathfrak{g}$. The connected subgroups $R$ and $N$ of $G$ generated by, respectively, $\mathfrak{r}$ and $\mathfrak{n}$ are called the solvable and nilpotent radicals of $G$. Since the derived subalgebra $\mathfrak{r}'$ of $\mathfrak{r}$ is a nilpotent ideal of $\mathfrak{g}$, it follows that $\mathfrak{r}' \subset \mathfrak{n} \subset \mathfrak{r}$ and hence that $R' \subset N \subset R$, where $R'$ is the derived subgroup of $R$. Since $\widetilde{R}$ and $\widetilde{N}$ are simply-connected, we also have the following result (see Proposition 11.2.15 of \cite{hilgert-neeb}).

\begin{proposicao}\label{vector-space}
The quotient $\widetilde{R}/\widetilde{N}$ is an abelian simply-connected Lie group and hence is isomorphic to a vector space.
\end{proposicao}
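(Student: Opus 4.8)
\noindent The plan is to pass to the Lie algebra level, check that the relevant quotient algebra is abelian, and then carry the conclusion back to the groups, using a homotopy exact sequence to get simple connectedness and then the structure of connected abelian Lie groups recalled above.

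First I would note that, since $\mathfrak{n}$ is an ideal of $\mathfrak{g}$ contained in $\mathfrak{r}$, it is in particular an ideal of $\mathfrak{r}$, so $\widetilde{N}$ is a normal subgroup of $\widetilde{R}$; moreover, being a connected subgroup of the simply-connected solvable Lie group $\widetilde{R}$, it is closed (this is the characteristic feature of simply-connected solvable Lie groups that makes the argument run; cf.\ Proposition 11.2.15 of \cite{hilgert-neeb}). Hence $Q := \widetilde{R}/\widetilde{N}$ is a connected Lie group whose Lie algebra is $\mathfrak{r}/\mathfrak{n}$. Because $\mathfrak{r}' \subset \mathfrak{n}$, the derived subalgebra of $\mathfrak{r}/\mathfrak{n}$ is trivial, so $\mathfrak{r}/\mathfrak{n}$ is abelian and therefore $Q$ is a connected abelian Lie group.

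Next I would consider the fiber bundle $\widetilde{N} \hookrightarrow \widetilde{R} \to Q$ and the tail of its long exact sequence of homotopy groups, $\pi_1(\widetilde{R}) \to \pi_1(Q) \to \pi_0(\widetilde{N})$. Since $\widetilde{R}$ is simply connected the first term vanishes, and since $\widetilde{N}$ is connected the last term is trivial, so $\pi_1(Q)$ is trivial; that is, $Q$ is simply connected. Finally, by the structural remark made just after the first Proposition of this section, every connected abelian Lie group is isomorphic to the product of a vector space and a torus; a simply-connected one cannot have a nontrivial toral factor, since a torus has infinite fundamental group, so $Q \simeq \mathbb{R}^{k}$ for some $k \geq 0$, which is exactly the assertion. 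The one step above that is not pure bookkeeping is the closedness of $\widetilde{N}$ in $\widetilde{R}$; everything else is routine, so I would not expect any genuine obstacle.
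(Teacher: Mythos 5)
The paper offers no proof of this proposition; it is stated with a bare citation to Proposition 11.2.15 of Hilgert--Neeb. Your argument is a correct reconstruction of the cited fact: $\mathfrak{n}$ is an ideal of $\mathfrak{r}$ containing $\mathfrak{r}'$, so $\widetilde{N}$ is a closed normal connected subgroup of $\widetilde{R}$ with abelian quotient, and the homotopy exact sequence of the fibration $\widetilde{N} \hookrightarrow \widetilde{R} \to \widetilde{R}/\widetilde{N}$ (together with $\pi_1(\widetilde{R})=0$ and $\pi_0(\widetilde{N})$ trivial) forces $\widetilde{R}/\widetilde{N}$ to be simply connected, and a simply-connected connected abelian Lie group is a vector space. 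One small remark on the closedness step: you invoke the stronger theorem that \emph{every} connected subgroup of a simply-connected solvable Lie group is closed, which is true, but here you only need the more elementary fact that the connected subgroup attached to an ideal of the Lie algebra of \emph{any} simply-connected Lie group is closed and normal (it is the identity component of the kernel of the homomorphism onto the simply-connected group integrating the quotient algebra). Either way the argument goes through, so the difference is a matter of how much machinery one wishes to cite.
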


The solvable radicals can also be characterized by the following result (see Lemma 5.6.1 of \cite{hilgert-neeb}).

\begin{proposicao}\label{radicals}
The solvable radical $\mathfrak{r}$ is the unique ideal of $\mathfrak{g}$ such that $\mathfrak{g}/\mathfrak{r}$ is semi-simple. The solvable radical $R$ is the unique connected normal subgroup of $G$ such that $G/R$ is semi-simple. 
\end{proposicao}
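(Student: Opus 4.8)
The plan is to prove the assertion first at the level of the Lie algebra $\mathfrak{g}$, where it is essentially a reformulation of the defining maximality property of $\mathfrak{r}$, and then to transport the conclusions to $G$ by means of the correspondence between connected subgroups and subalgebras.

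For the algebraic part I would argue in two steps. \textbf{Step 1: $\mathfrak{g}/\mathfrak{r}$ is semisimple.} If it were not, it would contain a nonzero abelian, hence solvable, ideal, necessarily of the form $\mathfrak{b}/\mathfrak{r}$ for an ideal $\mathfrak{b}$ of $\mathfrak{g}$ with $\mathfrak{r}\subseteq\mathfrak{b}$ and $\mathfrak{r}\neq\mathfrak{b}$. Since $\mathfrak{r}$ and $\mathfrak{b}/\mathfrak{r}$ are both solvable, $\mathfrak{b}$ is solvable, contradicting the maximality of $\mathfrak{r}$ among solvable ideals. \textbf{Step 2: characterization.} Let $\mathfrak{a}$ be any ideal of $\mathfrak{g}$ with $\mathfrak{g}/\mathfrak{a}$ semisimple. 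The image of $\mathfrak{r}$ in $\mathfrak{g}/\mathfrak{a}$ is $(\mathfrak{r}+\mathfrak{a})/\mathfrak{a}\cong\mathfrak{r}/(\mathfrak{r}\cap\mathfrak{a})$, which is a solvable ideal of the semisimple algebra $\mathfrak{g}/\mathfrak{a}$ and therefore zero; hence $\mathfrak{r}\subseteq\mathfrak{a}$. When moreover $\mathfrak{a}$ is solvable — the relevant case, since the solvable radical is by definition solvable — the maximality of $\mathfrak{r}$ gives $\mathfrak{a}\subseteq\mathfrak{r}$, so $\mathfrak{a}=\mathfrak{r}$.

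To pass to $G$, I would first note that $R$ is normal: conjugation acts on $\mathfrak{g}$ by $\mathrm{Ad}$, which preserves the ideal $\mathfrak{r}$, hence preserves the connected subgroup $R$ generated by $\exp\mathfrak{r}$, and $G$ being connected this suffices. Next, $R$ is closed: its closure $\overline{R}$ is a connected subgroup whose Lie algebra is again solvable, because the closure of a connected solvable subgroup is solvable, hence is a solvable ideal contained in $\mathfrak{r}$ by maximality; so $\mathrm{Lie}(\overline{R})=\mathfrak{r}=\mathrm{Lie}(R)$ and, by uniqueness of the connected subgroup with a given subalgebra, $\overline{R}=R$. Consequently $G/R$ is a connected Lie group with Lie algebra $\mathfrak{g}/\mathfrak{r}$, which is semisimple by Step 1, so $G/R$ is semisimple. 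Conversely, if $N$ is a connected normal subgroup of $G$ for which $G/N$ is a semisimple Lie group, then $\mathrm{Lie}(N)$ is an ideal of $\mathfrak{g}$ with semisimple quotient $\mathrm{Lie}(G/N)=\mathfrak{g}/\mathrm{Lie}(N)$, so $\mathfrak{r}\subseteq\mathrm{Lie}(N)$ and hence $R\subseteq N$; and if $N$ is solvable this forces $\mathrm{Lie}(N)=\mathfrak{r}$ and therefore $N=R$, since $N$ and $R$ are then connected subgroups with the same Lie algebra.

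The purely algebraic manipulations are routine; I expect the genuine content to lie in the group-theoretic facts used above — that the closure of a connected solvable subgroup is solvable (so that $R$ is closed and $G/R$ is a Lie group at all), that a connected Lie group is semisimple precisely when its Lie algebra is, and that a connected immersed subgroup is determined by its subalgebra. These are exactly the structural ingredients packaged in the cited Lemma 5.6.1 of \cite{hilgert-neeb}, and a genuinely self-contained proof would have to develop each of them.
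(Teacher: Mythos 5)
The paper offers no proof here — only a citation to Lemma 5.6.1 of Hilgert--Neeb — so yours is a reconstruction rather than an alternative route, and it is correct. You were right to be uneasy about the word ``unique'': read literally, both clauses of the proposition are false. For $\mathfrak{g} = \mathfrak{sl}(2,\mathbb{R}) \oplus \mathfrak{sl}(2,\mathbb{R})$ the radical is zero, yet $\mathfrak{sl}(2,\mathbb{R}) \oplus 0$ is a nonzero ideal with semisimple quotient; correspondingly, $\mathrm{SL}(2,\mathbb{R}) \times \{1\}$ is a nontrivial connected normal subgroup of the product whose quotient is semisimple. What the cited lemma (and your Step 2) actually establishes is that $\mathfrak{r}$ is the \emph{smallest} ideal with semisimple quotient — equivalently, the unique \emph{solvable} one — and likewise for $R$; this is precisely the form in which the proposition is invoked in the proof of Proposition \ref{prop-toral-components-trivial}, where the candidate subgroup $R/T(R)$ is solvable by construction. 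Your explicit observation that uniqueness holds ``when moreover $\mathfrak{a}$ is solvable'' is therefore not a side remark but the statement one actually needs. The group-theoretic ingredients you isolate (closure of a connected solvable subgroup is solvable; a connected Lie subgroup is determined by its subalgebra; $\mathrm{Lie}(\overline{R})$ is an ideal because $\overline{R}$ is normal) are used correctly; you should, however, state explicitly that $G$ is connected, since ``$G/R$ is semisimple'' presupposes it and the proposition is applied only in that setting.
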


The adjoint representation of $G$ is the map given by $\mathrm{Ad}(g) = d(C_g)_1$, where $C_g(h) = ghg^{-1}$ is the conjugation by $g \in G$. The adjoint representation of $\mathfrak{g}$ is the map given by $\mathrm{ad}(X)Y = [X,Y]$, where $X,Y \in \mathfrak{g}$. When $G$ is a linear Lie group, we have that $\mathrm{Ad}(g)X = gXg^{-1}$ and that $\mathrm{ad}(X)Y = XY-YX$. In the general linear algebra $\mathfrak{gl}(n,\mathbb{C})$ of all complex square matrices, we can consider the Cartan subalgebra $\mathfrak{h}$ of the diagonal matrices of $\mathfrak{g}$. It is easy to verify that
\begin{equation}
\mathrm{ad}(H)E_{ij} = \alpha_{ij}(H)E_{ij}
\end{equation}
where $E_{ij}$ is the square matrix which has one in the position $ij$ and zeros in the other positions and $\alpha_{ij}(H) = h_{i} - h_{j}$, where $H = \mathrm{diag}(h_1,\ldots,h_n)$. The linear map $\alpha_{ij}$ is called a root associated to the Cartan subalgebra $\mathfrak{h}$, the subspace $\mathfrak{g}_{\alpha_{ij}}$ generated by $E_{ij}$ is called the root space associated to $\alpha_{ij}$, and 
\begin{equation}
\mathfrak{gl}(n,\mathbb{C})
= 
\mathfrak{h} \oplus \sum_{ij} \mathfrak{g}_{\alpha_{ij}}
\end{equation}
is called the root space decomposition of $\mathfrak{gl}(n,\mathbb{C})$ associated to $\mathfrak{h}$. A subalgebra $\mathfrak{k}$ of $\mathfrak{g}$ is compactly embedded if the subgroup generated by $\mathrm{ad}(\mathfrak{k})$ has compact closure in $\mathfrak{gl}(\mathfrak{g})$ and it is maximal if it is not properly contained in any other. The following result is known as the Fundamental Theorem on the Center (see Exercise 14.2.1, Lemma 14.2.4 and Theorems 14.2.7 and 14.2.8 of \cite{hilgert-neeb}).

\begin{teorema}\label{fundamental-center}
The image of a maximal compactly embedded abelian subalgebra by an automorphism is a maximal compactly embedded abelian subalgebra. Two maximal compactly embedded abelian subalgebras are conjugated by an inner automorphism. The center of a connected Lie group $G$ is given by 
\begin{equation}
Z\left(G\right) = \exp\{X \in \mathfrak{h} : \mbox{the spectrum of } \mathrm{ad}(X) \subset 2\pi i \mathbb{Z} \}
\end{equation}
where $\mathfrak{h}$ is any maximal compactly embedded abelian subalgebra of $\mathfrak{g}$. If $\exp(X) \in Z\left(G\right)$, then $\mathrm{ad}(X)$ is semi-simple and its spectrum is in $2\pi i \mathbb{Z}$.
\end{teorema}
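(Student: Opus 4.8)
The plan is to treat the four assertions in turn: the first is purely formal, the statement that $\mathrm{ad}(X)$ is semisimple with spectrum in $2\pi i\mathbb{Z}$ whenever $\exp(X) \in Z(G)$ is a linear-algebra consequence of $\mathrm{Ad}(\exp X) = e^{\mathrm{ad}(X)}$, and the substance is the conjugacy statement together with one inclusion of the formula for $Z(G)$. For the first assertion, let $\varphi \in \mathrm{Aut}(\mathfrak{g})$ and let $\mathfrak{h}$ be a maximal compactly embedded abelian subalgebra. Then $\varphi(\mathfrak{h})$ is again abelian, and since $\mathrm{ad}(\varphi(X)) = \varphi\,\mathrm{ad}(X)\,\varphi^{-1}$ for all $X \in \mathfrak{g}$, the subgroup of $\mathrm{GL}(\mathfrak{g})$ generated by $\mathrm{ad}(\varphi(\mathfrak{h}))$ is the $\varphi$-conjugate of the one generated by $\mathrm{ad}(\mathfrak{h})$; as conjugation by $\varphi$ is a homeomorphism of $\mathrm{GL}(\mathfrak{g})$, one of these subgroups has compact closure if and only if the other does, so $\varphi(\mathfrak{h})$ is compactly embedded. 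Since $\varphi$ is an inclusion-preserving bijection of the set of subalgebras of $\mathfrak{g}$, it carries maximal compactly embedded abelian subalgebras to maximal ones.

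Next I would prove the last assertion. For connected $G$ one has $Z(G) = \ker(\mathrm{Ad})$, so $\exp(X) \in Z(G)$ forces $e^{\mathrm{ad}(X)} = \mathrm{Ad}(\exp X) = \mathrm{id}$. Writing $\mathrm{ad}(X) = S + N$ for its additive Jordan decomposition, $e^{\mathrm{ad}(X)} = e^{S}e^{N}$ with $e^{S}$ semisimple, $e^{N}$ unipotent, and the two commuting; uniqueness of the multiplicative Jordan decomposition of $\mathrm{id}$ gives $e^{S} = e^{N} = \mathrm{id}$, and $e^{N} = \mathrm{id}$ with $N$ nilpotent forces $N = 0$. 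Hence $\mathrm{ad}(X)$ is semisimple, and $e^{\mathrm{ad}(X)} = \mathrm{id}$ then says precisely that each eigenvalue $\lambda$ of $\mathrm{ad}(X)$ satisfies $e^{\lambda} = 1$, i.e.\ $\mathrm{spec}(\mathrm{ad}(X)) \subseteq 2\pi i\mathbb{Z}$. With this in hand, the inclusion $\supseteq$ in the formula for $Z(G)$ is immediate: if $X \in \mathfrak{h}$ with $\mathrm{spec}(\mathrm{ad}(X)) \subseteq 2\pi i\mathbb{Z}$ then, since $X$ lies in a compactly embedded subalgebra, $\mathrm{ad}(X)$ generates a relatively compact one-parameter subgroup of $\mathrm{GL}(\mathfrak{g})$ and is therefore semisimple, so $e^{\mathrm{ad}(X)}$ is diagonalizable over $\mathbb{C}$ with all eigenvalues equal to $1$; thus $\mathrm{Ad}(\exp X) = \mathrm{id}$ and $\exp X \in Z(G)$.

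It remains to establish the conjugacy statement and the inclusion $\subseteq$ in the center formula, which is where I expect the main obstacle. For the conjugacy I would fix a maximal compactly embedded subalgebra $\mathfrak{k} \supseteq \mathfrak{h}$ and work inside $\mathrm{Inn}(\mathfrak{g}) = \mathrm{Aut}(\mathfrak{g})_{0}$: the closure $U := \overline{\langle e^{\mathrm{ad}\,\mathfrak{k}}\rangle}$ is a compact connected subgroup, in fact a maximal compact subgroup by maximality of $\mathfrak{k}$, and a second maximal compactly embedded abelian subalgebra $\mathfrak{h}'$ yields a torus $\overline{\langle e^{\mathrm{ad}\,\mathfrak{h}'}\rangle}$ in $\mathrm{Inn}(\mathfrak{g})$; by conjugacy of maximal compact subgroups this torus can be moved by an inner automorphism into $U$, and then by the classical conjugacy of maximal tori of the compact group $U$ it can be moved, again by an inner automorphism, onto $\overline{\langle e^{\mathrm{ad}\,\mathfrak{h}}\rangle}$, so that chasing the conjugations back produces an inner automorphism of $\mathfrak{g}$ carrying $\mathfrak{h}'$ to $\mathfrak{h}$. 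For the inclusion $Z(G) \subseteq \exp(\mathfrak{h})$ I would reduce to $G$ simply connected --- a central element of $G = \widetilde{G}/\widetilde{\Gamma}$ lifts to a central element of $\widetilde{G}$, and $\pi\circ\exp_{\widetilde{G}} = \exp_{G}$ --- and then use the finer structure theory of compactly embedded Cartan subalgebras to locate any $g \in Z(\widetilde{G})$ in the analytic subgroup of $\mathfrak{h}$; together with the last assertion already proved, this gives the formula. The delicate points, which I expect to be the real obstacle, are the precise identification of ``maximal compactly embedded abelian subalgebra'' with ``Lie algebra of a maximal torus of $U$'' (which rests on the maximality of $\mathfrak{k}$ and on every compactly embedded subalgebra being conjugate into $\mathfrak{k}$) and the ensuing description of $Z(\widetilde{G})$; since these are classical, in the paper I would simply invoke Exercise 14.2.1, Lemma 14.2.4 and Theorems 14.2.7 and 14.2.8 of \cite{hilgert-neeb}.
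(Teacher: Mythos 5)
The paper does not actually prove this theorem: it is stated as a recollection of a known result and immediately attributed to Exercise~14.2.1, Lemma~14.2.4 and Theorems~14.2.7, 14.2.8 of \cite{hilgert-neeb}, with no argument given. Your proposal therefore cannot be compared to a proof in the paper; you are supplying detail where the paper supplies only a citation. That said, your proposal is sound as far as it goes, and you are honest about its limits. The first assertion is handled correctly by the conjugation identity $\mathrm{ad}(\varphi(X)) = \varphi\,\mathrm{ad}(X)\,\varphi^{-1}$ together with the observation that an automorphism is an order-preserving bijection on subalgebras. The last assertion is correctly reduced, via $Z(G) = \ker\mathrm{Ad}$ for connected $G$ and $\mathrm{Ad}(\exp X) = e^{\mathrm{ad}X}$, to the uniqueness of the multiplicative Jordan decomposition of the identity; and the $\supseteq$ inclusion in the center formula follows from the fact that a one-parameter group $\{e^{t\,\mathrm{ad}X}\}$ with relatively compact closure forces $\mathrm{ad}(X)$ to be semisimple with purely imaginary spectrum. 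For the conjugacy of maximal compactly embedded abelian subalgebras and the inclusion $Z(G)\subseteq\exp\{\cdots\}$, your sketch via maximal compact subgroups of $\mathrm{Inn}(\mathfrak{g})$ and conjugacy of maximal tori is the right shape, but --- as you correctly anticipate --- it hides real technical work (closedness issues for $\mathrm{Inn}(\mathfrak{g})$, the identification of maximal compactly embedded abelian subalgebras with Lie algebras of maximal tori of the relevant compact group, and the reduction to $\widetilde G$), and at that point you fall back on the very references the paper cites. In short: you prove the formal half and, like the paper, invoke \cite{hilgert-neeb} for the substantive half, so there is no genuine gap relative to what the paper actually does, but neither is there an independent proof here.
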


\section{Maximal torus}\label{maximal}

In this section, we prove some results related to the maximal torus $T(G)$ of the center of a Lie group $G$. We start with a well known lemma whose proof is presented for the sake of the reader's convenience.

\begin{lema}\label{lema-torus-normal-central}
Let $G$ be a connected Lie group and $T$ be a torus which is a normal subgroup of $G$. Then $T$ is a subgroup of $Z(G)$. 
\end{lema}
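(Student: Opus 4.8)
The goal is to show that a torus $T$ which is normal in a connected Lie group $G$ is actually central. The plan is to use the fact that conjugation gives a continuous action of the connected group $G$ on $T$ by automorphisms, together with Proposition \ref{torus-discrete}, which tells us that the automorphism group of a torus is discrete.

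First I would consider, for each $g \in G$, the conjugation map $C_g\colon G \to G$, $C_g(h) = ghg^{-1}$. Since $T$ is normal, $C_g$ restricts to an automorphism $C_g|_T$ of $T$, because $C_g$ is a Lie group automorphism of $G$ carrying $T$ into $T$, and its inverse $C_{g^{-1}}$ does likewise. This yields a homomorphism $\rho\colon G \to \mathrm{Aut}(T)$, $\rho(g) = C_g|_T$, and the key point is that $\rho$ is continuous: the map $G \times T \to T$, $(g,t) \mapsto gtg^{-1}$, is smooth as a restriction of the smooth conjugation map on $G$, so $\rho$ is continuous into the Lie group $\mathrm{Aut}(T)$. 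By Proposition \ref{torus-discrete}, if $T \simeq \mathbb{R}^n/\mathbb{Z}^n$ then $\mathrm{Aut}(T) \simeq \mathrm{GL}(n,\mathbb{Z})$ is a discrete group.

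Now the conclusion follows from connectedness: a continuous homomorphism from the connected group $G$ into a discrete group must be constant, hence $\rho(g) = \rho(1) = \mathrm{id}_T$ for all $g \in G$. This says precisely that $gtg^{-1} = t$ for all $g \in G$ and all $t \in T$, i.e. $T \subseteq Z(G)$. I do not expect any real obstacle here; the only point requiring a line of care is the continuity of $\rho$, which is immediate from smoothness of conjugation on $G$ and the identification of $\mathrm{Aut}(T)$ with a discrete matrix group. (Alternatively, one could argue at the Lie algebra level: normality of $T$ makes its Lie algebra $\mathfrak{t}$ an ideal, and since $\mathfrak{t}$ is the Lie algebra of a torus, $\mathrm{ad}(X)$ acts on $\mathfrak{t}$ for $X \in \mathfrak{g}$ with eigenvalues that must be both purely imaginary and — by the integrality forced by the lattice — zero, so $\mathfrak{t}$ is central in $\mathfrak{g}$, and then $T$ central in $G$ by connectedness; but the discreteness-of-$\mathrm{Aut}(T)$ argument is cleaner and is what I would write.)
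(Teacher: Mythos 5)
Your argument is correct and is essentially identical to the paper's proof: both restrict conjugation to $T$, obtain a continuous homomorphism from the connected group $G$ into the automorphism group of $T$, and use Proposition \ref{torus-discrete} (discreteness of $\mathrm{GL}(n,\mathbb{Z})$) to conclude the homomorphism is trivial, hence $T \subset Z(G)$. The extra remark on continuity of $\rho$ and the alternative Lie-algebra sketch are fine but not needed.
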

\begin{proof}
Let $C_g: G \to G$ be the conjugation by $g \in G$. Since $T$ is a normal subgroup of $G$, we can consider the restriction $C_g|_T: T \to T$. Hence $\Phi$, given by $\Phi(g) = C_g|_T$, is a continuous homomorphism from the connected group $G$ to the discrete group of the automorphisms of $T$ (see Proposition \ref{torus-discrete}). It follows that $\Phi$ is the trivial homomorphism, showing that $C_g(h) = h$, for all $g \in G$ and for all $h \in T$. This shows that $T \subset Z(G)$.
\end{proof}

The next result relates the maximal torus of the center of a given Lie group to the maximal torus of the center of its solvable and nilpotent radicals.

\begin{proposicao}\label{prop-toral-components-coincide}
Let $G$ be a connected Lie group, $R$ be its solvable radical and $N$ be its nilpotent radical. Then their toral components coincide
\begin{equation}
T(G) = T(R) = T(N)
\end{equation}
\end{proposicao}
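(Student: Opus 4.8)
The plan is to establish the two equalities $T(G) = T(N)$ and $T(R) = T(N)$ by a combination of normality arguments and the structural containments $R' \subset N \subset R$ collected in the Preliminaries. First I would observe that since $N$ is a characteristic subgroup of $G$ (being the nilpotent radical), the torus $T(N)$, which sits inside $Z(N)_0$, is preserved by every inner automorphism of $G$; more precisely, $T(N)$ is the unique maximal torus of the connected abelian group $Z(N)_0$ and is therefore carried to itself by any automorphism of $N$, in particular by each conjugation $C_g|_N$. Hence $T(N)$ is a normal torus in the connected group $G$, and Lemma \ref{lema-torus-normal-central} gives $T(N) \subset Z(G)$. Being connected and compact, $T(N)$ then lies in $Z(G)_0$, and since $T(G)$ is by definition the maximal torus of $Z(G)_0$, we conclude $T(N) \subset T(G)$. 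The same reasoning with $R$ in place of $G$ (note $N$ is also the nilpotent radical of the solvable group $R$, and is characteristic in $R$) yields $T(N) \subset T(R)$.

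For the reverse containments, the key point is that the toral part of the center of $G$ must already be nilpotent-radical-theoretic. I would argue as follows: $T(G)$ is a connected abelian, hence nilpotent, normal subgroup of $G$, so its Lie algebra is a nilpotent ideal of $\mathfrak{g}$ and therefore contained in the nilpotent radical $\mathfrak{n}$; thus $T(G) \subset N$. Since $T(G)$ is central in $G$ it is a fortiori central in $N$, so $T(G) \subset Z(N)$, and being a torus it lies in the maximal torus $T(N)$ of $Z(N)_0$; hence $T(G) \subset T(N)$. Combined with the previous paragraph this gives $T(G) = T(N)$. For $T(R) = T(N)$: again $T(R)$ is a connected abelian normal subgroup of $R$, but I must be a little careful — I want it normal in $G$, which follows because $T(R)$, as the maximal torus of $Z(R)_0$, is characteristic in $R$ and $R$ is normal in $G$, so $T(R) \trianglelefteq G$ and Lemma \ref{lema-torus-normal-central} puts it in $Z(G)$, whence $T(R) \subset T(G) = T(N)$.

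The main obstacle I anticipate is the care needed in the assertion that $T(N)$ (resp.\ $T(R)$, $T(G)$) is \emph{characteristic} in $N$ (resp.\ $R$, $G$): one must know that the maximal torus of the identity component of the center of a connected Lie group is uniquely determined and hence preserved by all automorphisms. This follows from the decomposition $Z(H)_0 \simeq V \times T(H)$ (a product of a vector space by a torus, as recorded after the first Proposition of the Preliminaries), since the torus factor is precisely the unique maximal compact subgroup of $Z(H)_0$ and any automorphism of $H$ restricts to an automorphism of $Z(H)_0$ carrying maximal compact subgroups to maximal compact subgroups. A secondary technical point is to check that the nilpotent radical of the solvable group $R$ really is $N$ — this is exactly the content of the containment $\mathfrak{r}' \subset \mathfrak{n} \subset \mathfrak{r}$ together with the definition of $\mathfrak{n}$ as the unique maximal nilpotent ideal, since an ideal of $\mathfrak g$ contained in $\mathfrak r$ is nilpotent iff it is a nilpotent ideal of $\mathfrak r$. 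Once these structural facts are in place, the argument is a short chain of inclusions in both directions.
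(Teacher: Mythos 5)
Your proof is correct and follows essentially the same strategy as the paper's: in one direction you use Lemma \ref{lema-torus-normal-central} together with the fact that the maximal torus of the identity component of the center is characteristic, while in the other you use the maximality of the nilpotent (resp.\ solvable) radical to locate $T(G)$ inside $N$ (resp.\ $R$). The paper proves $T(G) = T(R)$ directly and then declares the $N$ case ``similar,'' whereas you route everything through $T(N)$, but the ingredients and logic are the same.
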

\begin{proof}
Since $T(G)$ is a normal connected abelian subgroup of $G$, we have that $T(G) \subset R$ and hence $T(G) \subset T(R)$. On the other hand, let $C_g: G \to G$ be the conjugation by $g \in G$. Since $R$ is a characteristic subgroup of $G$, we can consider the restriction $C_g|_R: R \to R$. Since $T(R)$ is a characteristic subgroup of $R$, we can consider the restriction $C_g|_{T(R)}: T(R) \to T(R)$. This shows that $T(R)$ is a normal subgroup of $G$ which is a torus. By Lemma \ref{lema-torus-normal-central}, it follows that $T(R) \subset Z(G)$, and thus $T(R) \subset T(G)$. The proof that $T(G) = T(N)$ is similar to the previous one, just replacing $R$ by $N$. 
\end{proof}

We conclude this section with the following result, which is necessary for the connection between the nonexistence of Li-Yorke pairs and the main result of the next section.

\begin{proposicao}\label{prop-toral-components-trivial}
Let $G$ be a connected Lie group and $R$ be its solvable radical. We have that $R/T(R)$ is the solvable radical of $G/T(G)$ and 
\begin{equation}
T(G/T(G)) = T(R/T(R))
\end{equation}
is the trivial group. Furthermore, the nilpotent radical of $G/T(G)$ is simply-connected.
\end{proposicao}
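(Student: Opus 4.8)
The plan is to treat the three assertions successively, the main tool being Proposition~\ref{prop-toral-components-coincide}, which gives $T(G)=T(R)\subseteq R$ so that the quotient $R/T(R)=R/T(G)$ is a genuine subgroup of $G/T(G)$, together with the characterization of the radicals recalled in Section~\ref{preliminaries}. For the first assertion I would observe that $R/T(G)$ is connected and normal in $G/T(G)$ (since $R$ is connected and characteristic in $G$), is solvable as a quotient of the solvable group $R$, and satisfies $(G/T(G))/(R/T(G))\simeq G/R$, which is semisimple by Proposition~\ref{radicals}. The uniqueness part of Proposition~\ref{radicals} then identifies $R/T(G)=R/T(R)$ as the solvable radical of $G/T(G)$, and applying Proposition~\ref{prop-toral-components-coincide} to the connected Lie group $G/T(G)$ yields the asserted equality $T(G/T(G))=T(R/T(R))$.

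To show that this last group is trivial, I would pull back the torus. Let $\pi\colon R\to R/T(R)$ be the quotient map, set $S=T(R/T(R))$, and let $H=\pi^{-1}(S)$. Then $H$ contains $T(R)=\ker\pi$; it is normal in $R$ because $S$ is central, hence normal, in $R/T(R)$; and it is connected and compact, being an extension $1\to T(R)\to H\to S\to 1$ of one torus by another. Now $H$ is a compact connected solvable Lie group (a subgroup of the solvable group $R$), hence abelian — its derived group is simultaneously compact semisimple and solvable, so trivial — and therefore a torus. Being a torus normal in the connected group $R$, Lemma~\ref{lema-torus-normal-central} places $H$ inside $Z(R)$; and since $H$ is compact and connected while the identity component of $Z(R)$ is a product of a vector space with $T(R)$, we get $H\subseteq T(R)$. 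Together with $T(R)\subseteq H$ this forces $H=T(R)$, so $S=H/T(R)$ is trivial.

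For the third assertion I would note that the nilpotent radical $N'$ of $G/T(G)$ is a connected nilpotent Lie group and, by Proposition~\ref{prop-toral-components-coincide} applied to $G/T(G)$ together with the triviality just proved, $T(N')=T(G/T(G))=\{e\}$; hence it suffices to show that any connected nilpotent Lie group $M$ with $T(M)$ trivial is simply connected. Writing $M=\widetilde M/\Lambda$ as in Section~\ref{preliminaries}, with $\widetilde M$ the simply connected (nilpotent) cover and $\Lambda\simeq\pi_1(M)$ a discrete central subgroup, I would use that the center $Z(\widetilde M)$ of a simply connected nilpotent Lie group is a simply connected abelian group, hence a vector space, so that $\Lambda$ is a discrete subgroup of a vector group. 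If $\Lambda$ contained a nonzero element $\gamma$, the one-parameter subgroup $L$ it generates in $Z(\widetilde M)$ would project to $L/(L\cap\Lambda)\simeq S^1$, a nontrivial torus contained in $Z(M)$, contradicting $T(M)=\{e\}$. Hence $\Lambda$ is trivial and $M$ is simply connected.

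The substance of the argument is concentrated in two elementary but decisive observations: in the second assertion, that the preimage of the maximal central torus of $R/T(R)$ is an extension of a torus by a torus, hence compact — which via Lemma~\ref{lema-torus-normal-central} makes it central and contradicts the maximality of $T(R)$; and in the third, that $\pi_1$ of a connected nilpotent Lie group embeds into the vector group $Z(\widetilde M)$, so that any nontrivial loop class produces a central circle below. Beyond these, the only delicate point I expect is the bookkeeping with the equalities $T(G)=T(R)=T(N)$ and the identification of the radicals of $G/T(G)$; no input beyond Section~\ref{preliminaries} and Lemma~\ref{lema-torus-normal-central} seems to be needed.
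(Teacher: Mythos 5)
Your proof is correct and follows essentially the same route as the paper: the identification of $R/T(R)$ as the solvable radical of $G/T(G)$, the pullback argument showing $\pi^{-1}(T(R/T(R)))$ is a torus normal in $R$ hence contained in $T(R)$ via Lemma~\ref{lema-torus-normal-central}, and the reduction of the last assertion to the triviality of $T(N')$. The only difference is that where the paper cites Proposition~8 of \cite{caldas-patrao:endomorfismos} for ``connected nilpotent with trivial central torus is simply connected,'' you supply the short self-contained argument (the fundamental group embeds in $Z(\widetilde M)$, a vector group, and any nontrivial lattice element would produce a central circle); this is a harmless and correct expansion, not a different approach.
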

\begin{proof}
Since $T(G) = T(R)$, by Proposition \ref{prop-toral-components-coincide}, we have that $R/T(R)$ is a connected and solvable normal subgroup of $G/T(G)$ and that
\begin{equation}
\frac{G/T(G)}{R/T(R)} \simeq \frac{G}{R}
\end{equation}
is semi-simple. By Proposition \ref{radicals}, it follows that $R/T(R)$ is the solvable radical of $G/T(G)$. Consider the canonical homomorphism $\pi: R \to R/T(R)$. We have that $\pi^{-1}\left(T(R/T(R))\right)$ is a compact, connected, and solvable Lie group, and thus it is a torus. Since $T(R/T(R))$ is a normal subgroup, it follows that the subgroup $\pi^{-1}\left(T(R/T(R))\right)$ is also normal. By Lemma \ref{lema-torus-normal-central}, it follows that $\pi^{-1}\left(T(R/T(R))\right) \subset Z(R)$, and thus that $\pi^{-1}\left(T(R/T(R))\right) \subset T(R)$, which implies that $T(R/T(R))$ is trivial. It follows from Proposition \ref{prop-toral-components-coincide} that $T(G/T(G)) = T(R/T(R))$. It also follows that the toral component of the nilpotent radical of $G/T(G)$ is also trivial, which implies that it is simply-connected, since it is nilpotent (see Proposition 8 of \cite{caldas-patrao:endomorfismos}).
\end{proof}

\section{Finite order}\label{finite}

In this section, we prove the main technical result of the paper, which shows that the induced endomorphism on the maximal torus of the quotient of a connected solvable Lie group by its simply-connected nilpotent radical has finite order. We first need some preliminary results.

\begin{proposicao}\label{prop-diagram}
Let $R$ be a connected solvable group, $\widetilde{R}$ be its universal covering, and $\widetilde{\Gamma}$ be the discrete subgroup of $Z\left(\widetilde{R}\right)$ such that $R = \widetilde{R}/\widetilde{\Gamma}$. Denote by $N$ and $\widetilde{N}$ the the nilpotent radicals of, respectively, $R$ and $\widetilde{R}$, such that $N = \widetilde{\pi}(\widetilde{N})$. Then there exist isomorphisms $\psi$ and $\psi_2$ such that the following diagram commutes
\begin{displaymath}
    \xymatrix{
	R \ar[d]_{\pi} & \ar[l]_{\widetilde{\pi}} \widetilde{R} \ar[d]^{\pi_1} \ar[r]^{\widetilde{\pi}_2} & V \ar[d]^{\pi_2} \\
        A        & \ar[l]_{\psi} A_1 \ar[r]^{\psi_2} & A_2}
\end{displaymath}
where $V = \widetilde{R}/\widetilde{N}$ is a vector space, $A = R/N$, $A_1 = \widetilde{R}/\widetilde{\Gamma}\widetilde{N}$, $A_2 = V/\Gamma$, with $\Gamma = \widetilde{\pi}_2\left(\widetilde{\Gamma}\right)$, the maps $\widetilde{\pi}$, $\pi$, $\pi_1$, $\widetilde{\pi}_2$, $\pi_2$ are the respective canonical projections.

Furthermore, let $\phi: R \to R$ be a continuous surjective endomorphism, $\widetilde{\phi}: \widetilde{R} \to \widetilde{R}$ be the endomorphism induced by $\phi$ on $\widetilde{R}$, $\phi_2: V \to V$ be the endomorphism induced by $\widetilde{\phi}$ on $V$, $\varphi: A \to A$ be the endomorphism induced by $\phi$ on $A$, and $\varphi_2: A_2 \to A_2$ be the endomorphism induced by $\phi_2$ on $A_2$. Then $\varphi$ is conjugated to $\varphi_2$ by the isomorphism $\psi \circ \psi_2^{-1}$ and that $d(\varphi_2|_{T(A_2)})_1$ is conjugated to $d\left(\phi_2|_{V_\Gamma}\right)_1$ by the isomorphism $d(\pi_2|_{V_\Gamma})_1$, where $V_\Gamma$ is the subspace of $V$ generated by $\Gamma$.
\end{proposicao}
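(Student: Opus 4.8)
The plan is to build the diagram first from the universal covering data and then transport everything through it. First I would construct the three vertical quotient maps: $\pi\colon R\to A=R/N$, $\pi_1\colon\widetilde R\to A_1=\widetilde R/\widetilde\Gamma\widetilde N$, and $\pi_2\colon V\to A_2=V/\Gamma$. The top row $R\xleftarrow{\widetilde\pi}\widetilde R\xrightarrow{\widetilde\pi_2}V$ is given, with $V=\widetilde R/\widetilde N$ a vector space by Proposition \ref{vector-space}. To define $\psi\colon A_1\to A$ note that $\widetilde\pi$ carries $\widetilde\Gamma\widetilde N$ onto $N$ (since $\widetilde\pi(\widetilde N)=N$ and $\widetilde\Gamma=\ker\widetilde\pi$), so $\pi\circ\widetilde\pi$ factors through $A_1$; surjectivity of $\psi$ is clear, and injectivity follows because $\widetilde\pi^{-1}(N)=\widetilde\Gamma\widetilde N$ (one inclusion is definitional; the other uses that $\widetilde N$ is the full preimage of $N$ under $\widetilde\pi$ together with $\ker\widetilde\pi=\widetilde\Gamma$). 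Symmetrically, $\widetilde\pi_2$ carries $\widetilde\Gamma\widetilde N$ onto $\Gamma=\widetilde\pi_2(\widetilde\Gamma)$ (since $\widetilde N=\ker\widetilde\pi_2$), giving a well-defined $\psi_2\colon A_1\to A_2$, which is an isomorphism by the same kind of argument. Commutativity of the two squares is then immediate from the definitions of $\psi$ and $\psi_2$ as the induced maps.

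Next I would check naturality with respect to the endomorphisms. By the Proposition in Section \ref{preliminaries}, $\phi$ lifts uniquely to $\widetilde\phi$ on $\widetilde R$ with $\widetilde\phi(\widetilde\Gamma)\subset\widetilde\Gamma$; since $\widetilde N$ is the nilpotent radical of $\widetilde R$ it is characteristic, so $\widetilde\phi(\widetilde N)\subset\widetilde N$ and $\widetilde\phi$ descends to $\phi_2$ on $V$. Likewise $N$ is characteristic in $R$, so $\phi$ descends to $\varphi$ on $A$, and $\phi_2$ descends to $\varphi_2$ on $A_2=V/\Gamma$ because $\phi_2(\Gamma)=\phi_2(\widetilde\pi_2(\widetilde\Gamma))=\widetilde\pi_2(\widetilde\phi(\widetilde\Gamma))\subset\widetilde\pi_2(\widetilde\Gamma)=\Gamma$. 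All of these maps commute with the horizontal isomorphisms $\psi,\psi_2$ because each is induced from the single map $\widetilde\phi$; hence $\varphi=\psi\circ\psi_2^{-1}\circ\varphi_2\circ\psi_2\circ\psi^{-1}$, i.e. $\varphi$ is conjugate to $\varphi_2$ via $\psi\circ\psi_2^{-1}$.

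For the last assertion I would argue as follows. Since $A_2=V/\Gamma$ with $V$ a vector space and $\Gamma$ discrete, decompose $V=V_\Gamma\oplus W$ where $V_\Gamma$ is the linear span of $\Gamma$; then $A_2\simeq (V_\Gamma/\Gamma)\times W$, and $V_\Gamma/\Gamma$ is exactly $T(A_2)$, the maximal torus of the (abelian, hence central) group $A_2$, with $\Gamma$ a full-rank lattice in $V_\Gamma$. The endomorphism $\phi_2$ is linear on $V$ and preserves $\Gamma$, hence preserves $V_\Gamma=\mathrm{span}(\Gamma)$; its restriction $\phi_2|_{V_\Gamma}$ is a linear map, and it covers $\varphi_2|_{T(A_2)}$ under the covering $\pi_2|_{V_\Gamma}\colon V_\Gamma\to T(A_2)$. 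Because a torus is locally isomorphic to its covering vector space via the exponential (covering) map, $d(\pi_2|_{V_\Gamma})_1$ is a linear isomorphism intertwining $d(\phi_2|_{V_\Gamma})_1=\phi_2|_{V_\Gamma}$ with $d(\varphi_2|_{T(A_2)})_1$, which is the claimed conjugacy.

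The main obstacle I expect is pinning down $\psi$ as a genuine isomorphism, which hinges on the identity $\widetilde\pi^{-1}(N)=\widetilde\Gamma\widetilde N$: the inclusion $\supset$ is formal, but $\subset$ requires knowing that $\widetilde N$ is precisely the preimage of $N$ under the covering $\widetilde\pi$ (equivalently that the nilpotent radical of $R$ pulls back to the nilpotent radical of $\widetilde R$), which rests on the fact that covering maps induce Lie-algebra isomorphisms so that $\mathfrak n(\widetilde R)=\mathfrak n(R)$ and on connectedness to lift subgroups. The rest — commutativity of the squares and the equivariance with respect to $\widetilde\phi$ — is routine diagram chasing once the objects are correctly identified.
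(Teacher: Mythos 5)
Your overall approach matches the paper's: construct $\psi$ and $\psi_2$ by verifying the three vertical kernels all coincide with $\widetilde{\Gamma}\widetilde{N}$, observe that $\varphi$, $\varphi_1$, $\varphi_2$ are all induced from the single lift $\widetilde{\phi}$ so that the squares commute and yield the conjugacy via $\psi\circ\psi_2^{-1}$, and split $V = V_\Gamma\oplus W$ to identify $\pi_2(V_\Gamma)$ with $T(A_2)$ and transfer $\phi_2|_{V_\Gamma}$ through the local diffeomorphism $\pi_2|_{V_\Gamma}$.

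One passage, however, is incorrect as stated and should be fixed. You write (twice) that the inclusion $\widetilde{\pi}^{-1}(N)\subset\widetilde{\Gamma}\widetilde{N}$ ``requires knowing that $\widetilde N$ is precisely the preimage of $N$ under the covering $\widetilde\pi$,'' and you invoke the Lie-algebra isomorphism $\mathfrak{n}(\widetilde R)\simeq\mathfrak{n}(R)$ to justify this. That intermediate claim is false in general: the full preimage of $N$ under $\widetilde\pi$ is $\widetilde{\Gamma}\widetilde{N}$ itself, not $\widetilde{N}$, unless $\widetilde{\Gamma}\subset\widetilde{N}$ (which is not assumed). Moreover, no Lie-algebra argument is needed, because the proposition already gives you $N=\widetilde{\pi}(\widetilde{N})$ as part of the setup. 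The inclusion you want is purely formal: if $\widetilde{\pi}(g)\in N=\widetilde{\pi}(\widetilde{N})$, pick $n\in\widetilde{N}$ with $\widetilde{\pi}(g)=\widetilde{\pi}(n)$; then $gn^{-1}\in\ker\widetilde{\pi}=\widetilde{\Gamma}$, so $g\in\widetilde{\Gamma}\widetilde{N}$. Replacing your incorrect justification with this short argument makes the proof sound, and the rest of it then runs parallel to the paper's.
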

\begin{proof} 
We have that $V = \widetilde{R}/\widetilde{N}$ is a vector space, since it is a connected and simply-connected abelian Lie group (see Proposition \ref{vector-space}). Since $\widetilde{\Gamma}$ is a subgroup of $Z\left(\widetilde{R}\right)$, it follows that $\widetilde{N}\widetilde{\Gamma} = \widetilde{\Gamma}\widetilde{N}$ and that
\begin{equation}
\ker (\pi \circ \widetilde{\pi}) = \ker \pi_1 = \ker (\pi_2 \circ \widetilde{\pi}_2)
\end{equation}
Thus we can define $\psi$ and $\psi_2$ in order that the above diagram commutes and they are isomorphisms. 

By definition, the induced isomorphisms are such that $\phi \circ \widetilde{\pi} = \widetilde{\pi} \circ \widetilde{\phi}$ and that $\varphi \circ \pi = \pi \circ \phi$. Thus we have that $\varphi \circ \pi \circ \widetilde{\pi} = \pi \circ \widetilde{\pi} \circ \widetilde{\phi}$ and, using the commutative diagram, we have that $\varphi \circ \psi \circ \pi_1 = \psi \circ \pi_1 \circ \widetilde{\phi}$. By a similar argument, we have that $\varphi_2 \circ \psi_2 \circ \pi_1 = \psi_2 \circ \pi_1 \circ \widetilde{\phi}$. Denoting by $\varphi_1: A_1 \to A_1$ the endomorphism induced by $\widetilde{\phi}$ on $A_1$, we have that $\varphi_1 \circ \pi_1 = \pi_1 \circ \widetilde{\phi}$ and thus we have $\varphi \circ \psi \circ \pi_1 = \psi \circ \varphi_1 \circ \pi_1$ and $\varphi_2 \circ \psi_2 \circ \pi_1 = \psi_2 \circ \varphi_1 \circ \pi_1$. These imply that $\varphi \circ \psi = \psi \circ \varphi_1$ and that $\varphi_2 \circ \psi_2 = \psi_2 \circ \varphi_1$, which imply that $\varphi \circ \psi \circ \psi_2^{-1} = \psi \circ \psi_2^{-1} \circ \varphi_2$.

Since $\phi_2 \circ \widetilde{\pi}_2 = \widetilde{\pi}_2 \circ \widetilde{\phi}$, it follows that $\Gamma = \widetilde{\pi}_2\left(\widetilde{\Gamma}\right)$ is invariant by $\phi_2$, which is a linear map. Thus $V_\Gamma$ is also invariant by $\phi_2$ and we can consider the restriction $\phi_2|_{V_\Gamma}$. Since $\pi_2 \circ \phi_2 = \varphi_2 \circ \pi_2$, it follows that $d(\varphi_2|_{\pi_2(V_\Gamma)})_1$ is conjugated to $d\left(\phi_2|_{V_\Gamma}\right)_1$ by the isomorphism $d(\pi_2|_{V_\Gamma})_1$. It remains to show that $\pi_2(V_\Gamma)$ is equal to $T(A_2)$. But if $W$ is a complement of $V_\Gamma$ in $V$, it follows that $\rho: A_2 \to \pi_2(V_\Gamma) \times W$, given by $\rho(\pi_2(v)) = (\pi_2(v_\Gamma), w)$, where $v = v_\Gamma + w$, $v_\Gamma \in V_\Gamma$, and $w \in W$, is well defined and a isomorphism of Lie groups. In fact, if $v' = v_\Gamma' + w'$, $v_\Gamma' \in V_\Gamma$, and $w' \in W$, we have that $\pi_2(v) = \pi_2(v')$ if an only if $v - v' \in \Gamma$ if an only if $v_\Gamma - v_\Gamma' + w - w' \in \Gamma \subset V_\Gamma$  if an only if $v_\Gamma - v_\Gamma' \in \Gamma$ and $w - w' = 0$ if an only if $\rho(\pi_2(v)) = \rho(\pi_2(v'))$. Since $\rho(\pi_2(V_\Gamma)) = T(\pi_2(V_\Gamma) \times W)$, it follows that $\pi_2(V_\Gamma)$ is equal to $T(A_2)$.
\end{proof}

The following lemma was stated and proved by Luiz San Martin in a personal communication.

\begin{lema}\label{lemma-Ad-finite-order}
Let $\mathfrak{a}$ be an abelian subalgebra of $\mathfrak{g} = \mathfrak{gl}(n,\mathbb{C})$ whose elements are semi-simple and let $\theta \in \mathrm{Gl}(n,\mathbb{C})$ be an element of the normalizer of $\mathfrak{a}$. Then the restriction of $\mbox{Ad}(\theta)$ to $\mathfrak{a}$ has finite order.
\end{lema}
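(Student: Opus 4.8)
The plan is to reduce to the case where $\mathfrak{a}$ lies in the subalgebra $\mathfrak{h}$ of diagonal matrices and then to exhibit a finite subset of $\mathfrak{a}^{*}$ that spans $\mathfrak{a}^{*}$ and is permuted by $\mathrm{Ad}(\theta)$. Since $\mathfrak{a}$ is abelian and all of its elements are semi-simple, they form a commuting family of diagonalizable matrices and hence are simultaneously diagonalizable: there is $g \in \mathrm{GL}(n,\mathbb{C})$ with $\mathrm{Ad}(g)\mathfrak{a} \subset \mathfrak{h}$. Replacing $\mathfrak{a}$ by $\mathrm{Ad}(g)\mathfrak{a}$ and $\theta$ by $g\theta g^{-1}$, which still normalizes the new $\mathfrak{a}$, conjugates $\mathrm{Ad}(\theta)|_{\mathfrak{a}}$ to $\mathrm{Ad}(g\theta g^{-1})|_{\mathrm{Ad}(g)\mathfrak{a}}$, so the property of having finite order is unaffected; thus I may assume $\mathfrak{a} \subset \mathfrak{h}$, recalling that $\mathrm{Ad}(\theta)X = \theta X \theta^{-1}$ here.

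Next I consider the weights of $\mathfrak{a}$ on $\mathbb{C}^{n}$. For $H = \mathrm{diag}(h_{1},\dots,h_{n})$ put $\lambda_{i}(H) = h_{i}$, restrict each $\lambda_{i}$ to $\mathfrak{a}$, and let $S = \{\mu_{1},\dots,\mu_{k}\} \subset \mathfrak{a}^{*}$ be the set of distinct functionals so obtained. Then $S$ spans $\mathfrak{a}^{*}$: any $H \in \mathfrak{a}$ with $\mu(H) = 0$ for all $\mu \in S$ has all diagonal entries equal to zero, hence $H = 0$, so the elements of $S$ separate points of $\mathfrak{a}$. Write the joint eigenspace decomposition $\mathbb{C}^{n} = \bigoplus_{\mu \in S} V_{\mu}$, where $V_{\mu} = \{v \in \mathbb{C}^{n} : Hv = \mu(H)v \text{ for all } H \in \mathfrak{a}\}$ is nonzero for each $\mu \in S$.

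Now I exploit that $\theta$ normalizes $\mathfrak{a}$. For $\mu \in S$, $v \in V_{\mu}$ and $H \in \mathfrak{a}$, the element $\theta^{-1}H\theta = \mathrm{Ad}(\theta^{-1})H$ again lies in $\mathfrak{a}$, whence $H(\theta v) = \theta\big(\mathrm{Ad}(\theta^{-1})H\big)v = \mu\big(\mathrm{Ad}(\theta^{-1})H\big)\,\theta v$; therefore $\theta V_{\mu} = V_{\mu \,\circ\, \mathrm{Ad}(\theta^{-1})}$. Since $\theta$ is invertible, $V_{\mu\,\circ\,\mathrm{Ad}(\theta^{-1})} \neq 0$, so $\mu \circ \mathrm{Ad}(\theta^{-1}) \in S$. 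Thus the linear automorphism $L$ of $\mathfrak{a}^{*}$ defined by $L(\mu) = \mu \circ \mathrm{Ad}(\theta^{-1})|_{\mathfrak{a}}$ maps the finite set $S$ into itself and hence permutes it; consequently $L^{m} = \mathrm{id}$ on $S$ for some $m \geq 1$, and since $S$ spans $\mathfrak{a}^{*}$ this forces $L^{m} = \mathrm{id}$ on all of $\mathfrak{a}^{*}$. Dualizing, $\big(\mathrm{Ad}(\theta^{-1})|_{\mathfrak{a}}\big)^{m} = \mathrm{id}$, equivalently $\big(\mathrm{Ad}(\theta)|_{\mathfrak{a}}\big)^{m} = \mathrm{id}$, so $\mathrm{Ad}(\theta)$ restricted to $\mathfrak{a}$ has finite order.

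The only mildly delicate points are the simultaneous diagonalization of $\mathfrak{a}$ (standard over $\mathbb{C}$ for a commuting family of semi-simple operators) and the bookkeeping identifying the $\theta$-action on the weights; beyond these, the argument is just the elementary observation that a linear automorphism of a finite-dimensional space permuting a finite spanning set has finite order. I do not anticipate a serious obstacle here, as the statement is essentially a clean weight-space and permutation argument.
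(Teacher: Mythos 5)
Your proof is correct. Both you and the paper begin identically, conjugating so that $\mathfrak{a}$ lies in the diagonal Cartan $\mathfrak{h}$. After that the two arguments diverge in packaging, although the underlying combinatorics are the same: the paper passes to the centralizer $\mathfrak{z}(\mathfrak{a},\mathfrak{g})$, identifies it via the root space decomposition as a block-diagonal subalgebra, then looks at its center $\mathfrak{z}(\mathfrak{z}(\mathfrak{a},\mathfrak{g}))$ and observes that the normalizer of this bicommutant is generated by block-centralizing matrices together with block permutations, the latter acting with finite order. You instead work directly in the standard representation $\mathbb{C}^{n}$: you take the finite set $S$ of weights of $\mathfrak{a}$, show $S$ spans $\mathfrak{a}^{*}$, show that $\theta$ sends the weight space $V_{\mu}$ to $V_{\mu\circ\mathrm{Ad}(\theta^{-1})}$ and hence permutes $S$, and conclude that the transpose of $\mathrm{Ad}(\theta)|_{\mathfrak{a}}$ (and so $\mathrm{Ad}(\theta)|_{\mathfrak{a}}$ itself) has finite order. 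The weight spaces $V_{\mu}$ are precisely the ``blocks'' the paper manipulates inside $\mathfrak{gl}(n,\mathbb{C})$, so the permutation you exhibit is the same one the paper extracts from the normalizer of the bicommutant; but your route is shorter and more elementary, requiring only the observation that a linear map permuting a finite spanning set has finite order, and avoiding the double-centralizer and root-space bookkeeping entirely.
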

\begin{proof}
By the assumptions, up to a conjugation, we can assume that $\mathfrak{a}$ is contained in the Cartan subalgebra $\mathfrak{h}$ of the diagonal matrices of $\mathfrak{g}$. Consider the centralizer of $\mathfrak{a}$ in $\mathfrak{g}$ given by
\begin{equation}
\mathfrak{z}(\mathfrak{a},\mathfrak{g}) 
= 
\{X \in \mathfrak{g}: [X,H] = 0, \mbox{ for all } H \in \mathfrak{a}\}
\end{equation}
Since $\theta$ normalizes $\mathfrak{a}$, it follows that $\theta$ normalizes $\mathfrak{z}(\mathfrak{a},\mathfrak{g})$. In fact, if $X \in \mathfrak{z}(\mathfrak{a},\mathfrak{g})$, it follows that
\begin{equation}
[\mbox{Ad}(\theta)X,H] = \mbox{Ad}(\theta)[X,\mbox{Ad}(\theta)^{-1}H] = \mbox{Ad}(\theta)0 = 0
\end{equation}
since $\mbox{Ad}(\theta)^{-1}H \in \mathfrak{a}$. On the other hand, from the root space decomposition of $\mathfrak{g}$ associated to $\mathfrak{h}$, it follows that 
\begin{equation}
\mathfrak{z}(\mathfrak{a},\mathfrak{g})
= 
\mathfrak{h} \oplus \sum \{\mathfrak{g}_\alpha : \alpha(H) = 0, \mbox{ for all } H \in \mathfrak{a}\}
\end{equation}
where $\alpha$ and $\mathfrak{g}_\alpha$ are the roots and the root spaces associated to $\mathfrak{h}$. This implies that $\mathfrak{z}(\mathfrak{a},\mathfrak{g})$ is given by matrices with the following block diagonal decomposition
\begin{equation}
\left(
\begin{array}{cccc}
X_1 & & \\
& \ddots & \\
& & X_k
\end{array}
\right)
\end{equation}
where $X_1,\ldots,X_k$ are arbitrary square matrices. Since $\theta$ normalizes $\mathfrak{z}(\mathfrak{a},\mathfrak{g})$, it follows that $\theta$ normalizes its center $\mathfrak{z}(\mathfrak{z}(\mathfrak{a},\mathfrak{g}))$, which is given by diagonal matrices of the following form 
\begin{equation}
\left(
\begin{array}{cccc}
z_1I_1 & & \\
& \ddots & \\
& & z_kI_k
\end{array}
\right)
\end{equation}
where $I_1,\ldots,I_k$ are identity matrices and $z_1,\ldots,z_k \in \mathbb{C}$. But the normalizer of $\mathfrak{z}(\mathfrak{z}(\mathfrak{a},\mathfrak{g}))$ is formed by matrices with the same block structure as above and by matrices which permute blocks of the same size, if they exist. Hence the restriction of $\mbox{Ad}(\theta)$ to $\mathfrak{z}(\mathfrak{z}(\mathfrak{a},\mathfrak{g}))$ has finite order, since matrices with the same block structure as above centralize $\mathfrak{z}(\mathfrak{z}(\mathfrak{a},\mathfrak{g}))$ and matrices which permute blocks of the same size have finite order. Thus the lemma follows, since $\mathfrak{a}$ is contained in $\mathfrak{z}(\mathfrak{z}(\mathfrak{a},\mathfrak{g}))$.
\end{proof}

Now we prove the main result of this section and the main technical result of the paper.

\begin{teorema}\label{teo-toral-finite-order}
Let $R$ be a connected solvable group, $\phi: R \to R$ be a continuous surjective endomorphism, $N$ be the nilpotent radical of $R$, and and $\varphi$ is the endomorphism induced by $\phi$ on $A = R/N$. If $N$ is simply-connected, then $\varphi|_{T(A)}$ has finite order.
\end{teorema}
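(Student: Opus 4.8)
\noindent The plan is to move the problem, by means of Proposition~\ref{prop-diagram}, to a statement about a single linear automorphism, and then to extract the finiteness of its order from the Fundamental Theorem on the Center (Theorem~\ref{fundamental-center}) together with San Martin's Lemma~\ref{lemma-Ad-finite-order}. Write $\widetilde{R}$ for the universal covering of $R$, with $\widetilde{\Gamma}\subset Z(\widetilde{R})$ discrete and $R=\widetilde{R}/\widetilde{\Gamma}$, let $\widetilde{\phi}$ be the endomorphism of $\widetilde{R}$ induced by $\phi$, and put $\theta=d\widetilde{\phi}_1\colon\mathfrak{r}\to\mathfrak{r}$. First I would record that $\phi$ surjective forces $\widetilde{\phi}$ surjective: $\widetilde{\phi}(\widetilde{R})$ is a connected subgroup of $\widetilde{R}$ that maps onto $R$ under the covering $\widetilde{R}\to R$, hence has full dimension and equals $\widetilde{R}$; since $\widetilde{R}$ is simply connected this makes $\widetilde{\phi}$ an automorphism and therefore $\theta\in\operatorname{Aut}(\mathfrak{r})$. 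Because $\mathfrak{r}'\subseteq\mathfrak{n}$, the abelian simply connected group $V=\widetilde{R}/\widetilde{N}$ is identified through $\exp$ with $\mathfrak{r}/\mathfrak{n}$, and under this identification $\phi_2$ is the automorphism $\overline{\theta}$ of $\mathfrak{r}/\mathfrak{n}$ induced by $\theta$, while $\Gamma=\widetilde{\pi}_2(\widetilde{\Gamma})$ becomes the image of $\{X\in\mathfrak{r}:\exp X\in\widetilde{\Gamma}\}$. By Proposition~\ref{prop-diagram}, $\varphi|_{T(A)}$ is conjugate to $\varphi_2|_{T(A_2)}$, which on $T(A_2)=V_\Gamma/\Gamma$ is the map $\pi_2(v)\mapsto\pi_2(\phi_2(v))$; hence it suffices to prove that the linear map $\phi_2|_{V_\Gamma}=\overline{\theta}|_{V_\Gamma}$ has finite order, for then $\varphi_2^{\,m}|_{T(A_2)}=\operatorname{id}$ whenever $\phi_2^{\,m}|_{V_\Gamma}=\operatorname{id}$, and $\varphi|_{T(A)}$ has the same order by conjugacy.

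\noindent Next I would fix a maximal compactly embedded abelian subalgebra $\mathfrak{h}$ of $\mathfrak{r}$. Since $\mathfrak{h}$ is compactly embedded, the subgroup of $\operatorname{GL}(\mathfrak{r})$ generated by $\operatorname{ad}(\mathfrak{h})$ is relatively compact, so every $\operatorname{ad}(H)$ with $H\in\mathfrak{h}$ is semisimple with purely imaginary spectrum; thus $\operatorname{ad}(\mathfrak{h})$ is an abelian subalgebra of $\mathfrak{gl}(\mathfrak{r})\subset\mathfrak{gl}(n,\mathbb{C})$, $n=\dim\mathfrak{r}$, consisting of semisimple elements. By the Fundamental Theorem on the Center, $Z(\widetilde{R})=\exp\{X\in\mathfrak{h}:\operatorname{spec}(\operatorname{ad}X)\subset2\pi i\mathbb{Z}\}\subseteq\exp(\mathfrak{h})$, so $\widetilde{\Gamma}\subseteq\exp(\mathfrak{h})$ and therefore $\Gamma\subseteq(\mathfrak{h}+\mathfrak{n})/\mathfrak{n}$, whence $V_\Gamma\subseteq(\mathfrak{h}+\mathfrak{n})/\mathfrak{n}$. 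So it is enough to show that $\overline{\theta}$ has finite order on $(\mathfrak{h}+\mathfrak{n})/\mathfrak{n}$.

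\noindent Now comes the heart of the argument. By the Fundamental Theorem on the Center, $\theta(\mathfrak{h})$ is again a maximal compactly embedded abelian subalgebra, hence conjugate to $\mathfrak{h}$ by an inner automorphism $\operatorname{Ad}(g)$, $g\in\widetilde{R}$; set $\psi=\operatorname{Ad}(g)\circ\theta$, which normalizes $\mathfrak{h}$. Since $\operatorname{ad}(X)(\mathfrak{r})\subseteq\mathfrak{r}'\subseteq\mathfrak{n}$ for all $X\in\mathfrak{r}$, every inner automorphism of $\mathfrak{r}$ is the identity modulo $\mathfrak{n}$, so $\psi$ and $\theta$ induce the same map $\overline{\theta}$ on $\mathfrak{r}/\mathfrak{n}$. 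Regarding $\psi\in\operatorname{GL}(\mathfrak{r})\subset\operatorname{GL}(n,\mathbb{C})$, one has $\psi\,\operatorname{ad}(H)\,\psi^{-1}=\operatorname{ad}(\psi H)\in\operatorname{ad}(\mathfrak{h})$ for every $H\in\mathfrak{h}$, so $\psi$ lies in the normalizer of $\operatorname{ad}(\mathfrak{h})$. Lemma~\ref{lemma-Ad-finite-order} then provides $m\ge 1$ with $\operatorname{ad}(\psi^m H)=\operatorname{ad}(H)$ for all $H\in\mathfrak{h}$, that is, $\psi^m H-H\in\mathfrak{z}(\mathfrak{r})$. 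As $\mathfrak{z}(\mathfrak{r})$ is an abelian, hence nilpotent, ideal it is contained in $\mathfrak{n}$, so $\overline{\theta}^{\,m}=\overline{\psi}^{\,m}$ is the identity on $(\mathfrak{h}+\mathfrak{n})/\mathfrak{n}$, and a fortiori on $V_\Gamma$; hence $\phi_2|_{V_\Gamma}$, and so $\varphi|_{T(A)}$, has order dividing $m$.

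\noindent I expect the main obstacle to be the passage from $\theta$ to the automorphism $\psi$ normalizing $\mathfrak{h}$ without disturbing the map induced on $\mathfrak{r}/\mathfrak{n}$: this is where the structure theory is genuinely needed, through the conjugacy of maximal compactly embedded abelian subalgebras and through solvability in the form $\mathfrak{r}'\subseteq\mathfrak{n}$. A second, more bookkeeping-type, difficulty is checking that $\operatorname{ad}(\mathfrak{h})$ satisfies the hypotheses of San Martin's Lemma once transported into $\mathfrak{gl}(n,\mathbb{C})$, and tracking, through Proposition~\ref{prop-diagram}, the identification of $\varphi|_{T(A)}$ with the linear map $\phi_2|_{V_\Gamma}$, so that finite order of the torus endomorphism follows from finite order of its differential.
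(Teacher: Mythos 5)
Your proof is correct, but it takes a genuinely cleaner route than the paper's, and in fact shows more. After the common reduction via Proposition~\ref{prop-diagram} to the finiteness of order of $\phi_2|_{V_\Gamma}$, the paper constructs the subspace $\mathfrak{h}_\Gamma\subset\mathfrak{h}$ by lifting a basis of $\Gamma$ (this is precisely where $\widetilde{N}\cap\widetilde{\Gamma}=\{1\}$, i.e.\ the simply-connectedness of $N$, enters), then works inside the smaller algebra $\mathfrak{h}_\Gamma\oplus\mathfrak{n}$: it chooses a maximal compactly embedded abelian subalgebra $\widehat{\mathfrak{h}}$ of that subalgebra, proves $\widehat{\mathfrak{h}}=\mathfrak{h}_\Gamma\oplus\mathfrak{z}(\mathfrak{h}_\Gamma\oplus\mathfrak{n})$, projects onto $\mathfrak{h}_\Gamma$, and finally applies Lemma~\ref{lemma-Ad-finite-order} to $\mathfrak{a}=\mathrm{ad}(\mathfrak{h}_\Gamma)|_\mathfrak{n}$, having to check that $\rho(X)=\mathrm{ad}(X)|_\mathfrak{n}$ is injective on $\mathfrak{h}_\Gamma$. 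You instead stay in $\mathfrak{r}$ with the full $\mathfrak{h}$, correct $\theta=d\widetilde{\phi}_1$ by an inner automorphism to preserve $\mathfrak{h}$, apply the lemma to $\mathrm{ad}(\mathfrak{h})$, and observe that both the inner correction and the ambiguity $\ker(\mathrm{ad})=\mathfrak{z}(\mathfrak{r})$ are invisible modulo $\mathfrak{n}$ because $\mathfrak{r}'\subseteq\mathfrak{n}$ and $\mathfrak{z}(\mathfrak{r})\subseteq\mathfrak{n}$; the containment $V_\Gamma\subseteq(\mathfrak{h}+\mathfrak{n})/\mathfrak{n}$ comes directly from $\widetilde{\Gamma}\subseteq Z(\widetilde{R})\subseteq\exp(\mathfrak{h})$. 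This avoids the whole $\mathfrak{h}_\Gamma$/$\widehat{\mathfrak{h}}$/projection machinery and, notably, never invokes the hypothesis that $N$ is simply-connected: you prove the stronger statement that $\overline{\theta}$ already has finite order on all of $(\mathfrak{h}+\mathfrak{n})/\mathfrak{n}$, of which $V_\Gamma$ is a subspace regardless of the structure of $\widetilde{\Gamma}\cap\widetilde{N}$. The hypothesis is still harmless for the paper's applications (where it always holds after passing to $R/T(R)$), but your argument shows it is not needed for the theorem itself, which is a genuine simplification worth noting.
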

\begin{proof}
By Proposition \ref{prop-diagram} and using its notation, we have that $d(\varphi|_{T(A)})_1$ is conjugated to $d(\varphi_2|_{T(A_2)})_1$, which is conjugated to $d\left(\phi_2|_{V_\Gamma}\right)_1$. Thus it is enough to show that $d\left(\phi_2|_{V_\Gamma}\right)_1$ has finite order.

Since the homomorphism $\widetilde{\pi}|_{\widetilde{N}}: \widetilde{N} \to N$ is a covering map and $N$ is simply-connected, it follows that $\widetilde{\pi}|_{\widetilde{N}}$ is an isomorphism. This implies that $\ker\left(\widetilde{\pi}|_{\widetilde{N}}\right) = \widetilde{N} \cap \widetilde{\Gamma} = \{1\}$ and that $\widetilde{\pi}_2|_{\widetilde{\Gamma}}: \widetilde{\Gamma} \to \Gamma$ is also an isomorphism, since $\ker\left(\widetilde{\pi}_2|_{\widetilde{\Gamma}}\right) = \widetilde{N} \cap \widetilde{\Gamma} = \{1\}$. Since $\Gamma$ is a discrete subgroup of the vector space $V = \widetilde{R}/\widetilde{N}$, there exists a linear independent set $\{v_1,\ldots,v_k\}$ such that $\Gamma =  \mathbb{Z}v_1 \oplus \cdots \oplus \mathbb{Z}v_k$, which implies that $\{v_1,\ldots,v_k\}$ is a basis of $V_\Gamma$. By Theorem \ref{fundamental-center}, fixing an abelian subalgebra $\mathfrak{h}$ of the Lie algebra $\mathfrak{r}$ of $\widetilde{R}$, we have that
\begin{equation}
Z\left(\widetilde{R}\right) = \exp\{X \in \mathfrak{h} : \mbox{the spectrum of } \mathrm{ad}(X) \subset 2\pi i \mathbb{Z} \}
\end{equation}
By Theorem \ref{fundamental-center}, we also have that $\exp(X) \in Z\left(\widetilde{R}\right)$ if and only if $\mathrm{ad}(X)$ is semi-simple in $\mathfrak{r}$ and the spectrum of $\mathrm{ad}(X)$ is a subset of $2\pi i \mathbb{Z}$. Since $\widetilde{\Gamma}$ is a subgroup of $Z\left(\widetilde{R}\right)$, there exists $\{X_1,\ldots,X_k\} \subset \mathfrak{h}$ such that $\widetilde{\pi}_2(\exp(X_i)) = v_i$, for all $i \in \{1,\ldots,k\}$. Denoting by $\mathfrak{h}_\Gamma$ the subspace of $\mathfrak{h}$ generated by $\{X_1,\ldots,X_k\}$, it follows that $\widetilde{H}_\Gamma = \exp(\mathfrak{h}_\Gamma)$ is a connected subgroup of $\widetilde{R}$. Since $\Gamma$ is generated by $\{v_1,\ldots,v_k\}$ and $\widetilde{\pi}|_{\widetilde{N}}$ is an isomorphism, it follows that $\widetilde{\Gamma}$ is generated by $\{\exp(X_1),\ldots,\exp(X_k)\}$. We have that $\widetilde{\Gamma} \subset \widetilde{H}_\Gamma$ and thus $\Gamma \subset \widetilde{\pi}_2\left(\widetilde{H}_\Gamma\right)$, which implies that $V_\Gamma \subset \widetilde{\pi}_2\left(\widetilde{H}_\Gamma\right)$, since they are subspaces of $V$. On the other hand, we have that $\dim\left(\widetilde{\pi}_2\left(\widetilde{H}_\Gamma\right)\right) \leq \dim\left(\widetilde{H}_\Gamma\right) \leq k = \dim\left(V_\Gamma\right)$, showing that $\widetilde{\pi}_2|_{\widetilde{H}_\Gamma}: \widetilde{H}_\Gamma \to V_\Gamma$ is an isomorphism, which implies that $\ker\left(\widetilde{\pi}_2|_{\widetilde{H}_\Gamma}\right) = \widetilde{N} \cap \widetilde{H}_\Gamma = \{1\}$. Since $\phi_2 \circ \widetilde{\pi}_2 = \widetilde{\pi}_2 \circ \widetilde{\phi}$ and $V_\Gamma$ is invariant by $\phi_2$, it follows that $\widetilde{H}_\Gamma\widetilde{N}$ is invariant by $\widetilde{\phi}$ and thus we can consider the following commutative diagram
\begin{displaymath}
    \xymatrix{
	\widetilde{H}_\Gamma\widetilde{N} \ar[d]_{\widetilde{\pi}_2} \ar[r]^{\widetilde{\phi}|_{\widetilde{H}_\Gamma\widetilde{N}}} &  \widetilde{H}_\Gamma\widetilde{N} \ar[d]^{\widetilde{\pi}_2} \\
        V_\Gamma \ar[r]_{\phi_2|_{V_\Gamma}} &  V_\Gamma }
\end{displaymath}
Since $\widetilde{H}_\Gamma \cap \widetilde{N} = \{1\}$, it follows that $\mathfrak{h}_\Gamma \cap \mathfrak{n} = \{0\}$, where $\mathfrak{n}$ is the Lie algebra of $\widetilde{N}$. Since $\mathfrak{n}$ contains the derived subalgebra of $\mathfrak{r}$, it follows that $\mathfrak{h}_\Gamma \oplus \mathfrak{n}$ is a subalgebra. We have that $\mathfrak{h}_\Gamma$ is compactly embedded in $\mathfrak{h}_\Gamma \oplus \mathfrak{n}$. In fact, for all $X \in \mathfrak{h}_\Gamma$, we have that $\mathrm{ad}(X)$ is semi-simple and its spectrum is a subset of $i \mathbb{R}$, since this is true for the basis $\{X_1,\ldots,X_k\}$ of $\mathfrak{h}_\Gamma$, which is an abelian subalgebra. Hence the same is true for the restriction $\mathrm{ad}(X)|_{\mathfrak{h}_\Gamma \oplus \mathfrak{n}}$, showing the claim. We can also consider the restriction $\mathrm{ad}(X)|_{\mathfrak{h}_\Gamma} = 0$ and the restriction $\mathrm{ad}(X)|_\mathfrak{n}$ which is semi-simple and whose spectrum is also a subset of $i \mathbb{R}$. Let $\widehat{\mathfrak{h}}$ be a maximal compactly embedded abelian subalgebra of $\mathfrak{h}_\Gamma \oplus \mathfrak{n}$ containing $\mathfrak{h}_\Gamma$. Since $\widetilde{\phi}\left(\widetilde{H}_\Gamma\widetilde{N}\right) \subset \widetilde{H}_\Gamma\widetilde{N}$, it follows that $d\widetilde{\phi}_1\left(\mathfrak{h}_\Gamma \oplus \mathfrak{n}\right) = \mathfrak{h}_\Gamma \oplus \mathfrak{n}$. By Theorem \ref{fundamental-center}, we have that $d\widetilde{\phi}_1\widehat{\mathfrak{h}}$ is a maximal compactly embedded abelian subalgebra of $\mathfrak{h}_\Gamma \oplus \mathfrak{n}$. We also have, by by Theorem \ref{fundamental-center}, that there exists an inner automorphism $\widehat{\psi}$ of $\mathfrak{h}_\Gamma \oplus \mathfrak{n}$ such that $\widehat{\psi} d\widetilde{\phi}_1\widehat{\mathfrak{h}} = \widehat{\mathfrak{h}}$. By the commutative diagram above, it follows that 
\begin{equation}
d\left(\widetilde{\pi}_2\right)_1 \circ d\left(\widetilde{\phi}|_{\widetilde{H}_\Gamma\widetilde{N}}\right)_1 = d\left(\phi_2|_{V_\Gamma}\right)_1 \circ d\left(\widetilde{\pi}_2\right)_1
\end{equation}
On the other hand, we have that $d\left(\widetilde{\pi}_2\right)_1 \circ \widehat{\psi} = d\left(\widetilde{\pi}_2\right)_1$. In fact, we have that $\widehat{\psi} = \mbox{e}^{\mathrm{ad}(Y_1)}\cdots\mbox{e}^{\mathrm{ad}(Y_l)}$, where $Y_1,\ldots, Y_l \in \mathfrak{h}_\Gamma \oplus \mathfrak{n}$ and, for all $Y \in \mathfrak{h}_\Gamma \oplus \mathfrak{n}$, we have that $d\left(\widetilde{\pi}_2\right)_1 \circ \mbox{e}^{\mathrm{ad}(Y)} = d\left(\widetilde{\pi}_2\right)_1$, since $d\left(\widetilde{\pi}_2\right)_1|_\mathfrak{n} = 0$ and $\mathrm{ad}(Y)\left(\mathfrak{h}_\Gamma \oplus \mathfrak{n}\right) \subset \mathfrak{n}$. Hence, putting $\widehat{\phi} = \widehat{\psi} \circ d\left(\widetilde{\phi}|_{\widetilde{H}_\Gamma\widetilde{N}}\right)_1$, it follows that $\widehat{\phi}\widehat{\mathfrak{h}} = \widehat{\mathfrak{h}}$ and that
\begin{equation}
d\left(\widetilde{\pi}_2\right)_1 \circ \widehat{\phi} = d\left(\phi_2|_{V_\Gamma}\right)_1 \circ d\left(\widetilde{\pi}_2\right)_1
\end{equation}
We also have that $\widehat{\mathfrak{h}} = \mathfrak{h}_\Gamma \oplus \left(\widehat{\mathfrak{h}} \cap \mathfrak{n}\right)$, since if $Z \in \widehat{\mathfrak{h}}$, then $Z = X + Y$, where $X \in \mathfrak{h}_\Gamma$ and $Y \in \mathfrak{n}$, so that $Y = Z - X \in \widehat{\mathfrak{h}}$. We also have that $\widehat{\mathfrak{h}} \cap \mathfrak{n} = \mathfrak{z}\left(\mathfrak{h}_\Gamma \oplus \mathfrak{n}\right)$, where $\mathfrak{z}\left(\mathfrak{h}_\Gamma \oplus \mathfrak{n}\right)$ is the center of $\mathfrak{h}_\Gamma \oplus \mathfrak{n}$. In fact, by Theorem \ref{fundamental-center}, we have that $\mathfrak{z}\left(\mathfrak{h}_\Gamma \oplus \mathfrak{n}\right) \subset \widehat{\mathfrak{h}}$, since $\widehat{\mathfrak{h}}$ is a maximal compactly embedded subalgebra of $\mathfrak{h}_\Gamma \oplus \mathfrak{n}$. We also have that $\mathfrak{z}\left(\mathfrak{h}_\Gamma \oplus \mathfrak{n}\right) \subset \mathfrak{n}$, since $\mathfrak{z}\left(\mathfrak{h}_\Gamma \oplus \mathfrak{n}\right)$ is an abelian ideal of $\mathfrak{r}$. In fact, if $Z \in \mathfrak{z}\left(\mathfrak{h}_\Gamma \oplus \mathfrak{n}\right)$, $X \in \mathfrak{r}$, and $Y \in \mathfrak{h}_\Gamma \oplus \mathfrak{n}$, we have that
\begin{equation}
[Y,[X,Z]] = [[Y,X],Z] + [X,[Y,Z]] = 0
\end{equation}
since $\mathfrak{n}$ contains the derived subalgebra of $\mathfrak{r}$, which shows that $[X,Z] \in \mathfrak{z}\left(\mathfrak{h}_\Gamma \oplus \mathfrak{n}\right)$. The previous inequalities imply that $\mathfrak{z}\left(\mathfrak{h}_\Gamma \oplus \mathfrak{n}\right) \subset \widehat{\mathfrak{h}} \cap \mathfrak{n}$. On the other hand, if $Y \in \widehat{\mathfrak{h}} \cap \mathfrak{n}$, we have that $\mathrm{ad}(Y)|_{\mathfrak{h}_\Gamma} = 0$ and that $\mathrm{ad}(Y)|_{\mathfrak{n}}$ is nilpotent, which implies that $\mathrm{ad}(Y)|_{\mathfrak{h}_\Gamma \oplus \mathfrak{n}}$ is nilpotent. Hence the spectrum of $\mathrm{ad}(Y)|_{\mathfrak{h}_\Gamma \oplus \mathfrak{n}}$ is zero and thus is a subset of $2\pi i \mathbb{Z}$. By Theorem \ref{fundamental-center}, it follows that $\exp(Y)$ belongs to the center of the connected subgroup generated by $\mathfrak{h}_\Gamma \oplus \mathfrak{n}$. Thus it follows that $\mathrm{ad}(Y)|_{\mathfrak{h}_\Gamma \oplus \mathfrak{n}}$ is semi-simple. Since $\mathrm{ad}(Y)|_{\mathfrak{h}_\Gamma \oplus \mathfrak{n}}$ is also nilpotent, this implies that $\mathrm{ad}(Y)|_{\mathfrak{h}_\Gamma \oplus \mathfrak{n}} = 0$, showing that $Y \in \mathfrak{z}\left(\mathfrak{h}_\Gamma \oplus \mathfrak{n}\right)$.

Now denote by $P: \widehat{\mathfrak{h}} \to \mathfrak{h}_\Gamma$ the linear projection associated to the decomposition $\widehat{\mathfrak{h}} = \mathfrak{h}_\Gamma \oplus \left(\widehat{\mathfrak{h}} \cap \mathfrak{n}\right) = \mathfrak{h}_\Gamma \oplus \mathfrak{z}\left(\mathfrak{h}_\Gamma \oplus \mathfrak{n}\right)$. We have that 
$d\left(\widetilde{\pi}_2\right)_1|_{\mathfrak{h}_\Gamma} \circ P = d\left(\widetilde{\pi}_2\right)_1|_{\widehat{\mathfrak{h}}}$, since $d\left(\widetilde{\pi}_2\right)_1|_\mathfrak{n} = 0$. Putting $T = P \circ \widehat{\phi}|_{\mathfrak{h}_\Gamma}: \mathfrak{h}_\Gamma \to \mathfrak{h}_\Gamma$, we have that
\begin{eqnarray*}
d\left(\widetilde{\pi}_2\right)_1|_{\mathfrak{h}_\Gamma} \circ T 
& = & d\left(\widetilde{\pi}_2\right)_1|_{\mathfrak{h}_\Gamma} \circ P \circ \widehat{\phi}|_{\mathfrak{h}_\Gamma} \\
& = & d\left(\widetilde{\pi}_2\right)_1|_{\widehat{\mathfrak{h}}} \circ \widehat{\phi}|_{\mathfrak{h}_\Gamma} \\
& = & d\left(\phi_2|_{V_\Gamma}\right)_1 \circ d\left(\widetilde{\pi}_2\right)_1|_{\mathfrak{h}_\Gamma}
\end{eqnarray*}
Since $\widetilde{\pi}_2|_{\widetilde{H}_\Gamma}: \widetilde{H}_\Gamma \to V_\Gamma$ is an isomorphism, we have that $d\left(\phi_2|_{V_\Gamma}\right)_1$ is conjugated to $T$ and thus it is enough to show that $T$ has finite order. Now, considering the homomorphism $\rho: \mathfrak{h}_\Gamma \to \mathfrak{gl}(\mathfrak{n})$, given by $\rho(X) = \mathrm{ad}(X)|_{\mathfrak{n}}$, we have that $T$ is conjugated to $\mbox{Ad}(\theta)$ restricted to $\mathfrak{a}$, where $\theta = \widehat{\phi}|_{\mathfrak{n}}$ and $\mathfrak{a}$ is the image of $\rho$. In fact, we have that $\mbox{Ad}(\theta) \circ \rho = \rho \circ T$, since
\begin{eqnarray*}
\left(\mbox{Ad}(\theta) \circ \rho\right)(X)Y
& = & \theta(\rho(X))\theta^{-1}Y \\
& = & \theta[X,\theta^{-1}Y] \\
& = & \widehat{\phi}[X,\theta^{-1}Y] \\
& = & \left[\widehat{\phi}X,\widehat{\phi}\theta^{-1}Y\right] \\
& = & [TX,Y] \\
& = & \rho(TX) Y \\
& = & \left(\rho \circ T\right)(X)Y
\end{eqnarray*}
where $X \in \mathfrak{h}_\Gamma$, $Y \in \mathfrak{n}$, and we used that $\widehat{\phi}\widehat{\mathfrak{h}} = \widehat{\mathfrak{h}}$. It remains to prove that $\rho$ is an isomorphism. In fact, if $\rho(X) = \mathrm{ad}(X)|_\mathfrak{n} = 0$, then $X \in \mathfrak{z}\left(\mathfrak{h}_\Gamma \oplus \mathfrak{n}\right)$, since $X \in \mathfrak{h}_\Gamma$, which implies that $X = 0$, since $\mathfrak{h}_\Gamma  \cap \mathfrak{z}\left(\mathfrak{h}_\Gamma \oplus \mathfrak{n}\right) = 0$. Since $T$ and $\mbox{Ad}(\theta)$ restrict to $\mathfrak{a}$ are conjugated, the result follows from Lemma \ref{lemma-Ad-finite-order}, since each $\mathrm{ad}(X)|_\mathfrak{n} \in \mathfrak{a}$ is semi-simple and its spectrum is a subset of $i \mathbb{R}$. 
\end{proof}

\section{Li-Yorke pairs}\label{li-yorke}

In this section, we prove some useful results about the existence and nonexistence of Li-Yorke pairs. Given a continuous map $T: X \to X$ of a topological space $X$, a pair $(a,b) \in X \times X$ is called a \emph{Li-Yorke pair} when $a \neq b$ and there exist $c \in X$, $n_l \rightarrow \infty$ and $n_k \rightarrow \infty$ such that
\begin{equation}
\left(T^{n_l}(a),T^{n_l}(b)\right) \to (a,b)
\qquad \mbox{and} \qquad
\left(T^{n_k}(a),T^{n_k}(b)\right) \to (c,c)
\end{equation}
The main result shows that if the maximal torus of the center of a Lie group is trivial and the restriction of a endomorphism to the connected component of the identity is surjective, then there exists a positive power of this endomorphism that has no Li-Yorke pair. We start with a general result about Li-Yorke pairs and quotient of Lie groups.

\begin{proposicao}\label{prop-li-yorke-G-H}
Let $\phi: G \to G$ be a continuous endomorphism and $H$ be a closed subgroup of the Lie group $G$ such that $\phi(H) \subset H$. 
Consider the following commutative diagram
\begin{displaymath}
    \xymatrix{
        G \ar[r]^\phi \ar[d]_\pi & G \ar[d]^\pi \\
        G/H \ar[r]_\varphi       & G/H }
\end{displaymath}
where $\pi$ is the canonical projection and $\varphi$ has no Li-Yorke pair.  Then $\phi$ has a Li-Yorke pair if and only if $\phi|_H$ has a Li-Yorke pair.
\end{proposicao}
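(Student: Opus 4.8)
The plan is to prove the two implications separately, with the forward implication carrying essentially all the content. For the reverse implication, suppose $\phi|_H$ has a Li-Yorke pair $(a,b)$, with witness $c \in H$ and sequences $n_l, n_k \to \infty$ as in the definition. Since $H$ carries the subspace topology, convergence in $H$ coincides with convergence in $G$, and $\phi^n|_H = (\phi|_H)^n$; hence the very same data $(a,b)$, $c$, $n_l$, $n_k$ exhibit a Li-Yorke pair for $\phi$ (note $a \neq b$ in $G$ since $a \neq b$ in $H$). So this direction is immediate, and everything reduces to showing that a Li-Yorke pair of $\phi$ produces one of $\phi|_H$.

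For that, let $(a,b)$ be a Li-Yorke pair of $\phi$, with $a \neq b$, witness $c$, and $n_l, n_k \to \infty$ such that $(\phi^{n_l}(a),\phi^{n_l}(b)) \to (a,b)$ and $(\phi^{n_k}(a),\phi^{n_k}(b)) \to (c,c)$. Applying the canonical projection $\pi$ and using its continuity together with the relation $\pi \circ \phi^n = \varphi^n \circ \pi$ coming from the commutative diagram, I would get $(\varphi^{n_l}(\pi a),\varphi^{n_l}(\pi b)) \to (\pi a, \pi b)$ and $(\varphi^{n_k}(\pi a),\varphi^{n_k}(\pi b)) \to (\pi c, \pi c)$. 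If $\pi a \neq \pi b$, this exactly says that $(\pi a, \pi b)$ is a Li-Yorke pair of $\varphi$ with witness $\pi c$, contradicting the hypothesis. Therefore $\pi a = \pi b$, that is, $h := a^{-1}b$ lies in $H$, and $h \neq 1$ because $a \neq b$.

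The final step uses the homogeneity of the group. Since $\phi$ is a homomorphism, $\phi^n(h) = \phi^n(a)^{-1}\phi^n(b)$ for every $n$, so by continuity of multiplication and inversion in $G$ the subsequences above yield $\phi^{n_l}(h) \to a^{-1}b = h$ and $\phi^{n_k}(h) \to c^{-1}c = 1$. Combining this with the trivial identity $\phi^n(1) = 1$, the pair $(h,1) \in H \times H$ satisfies $(\phi^{n_l}(h),\phi^{n_l}(1)) \to (h,1)$ and $(\phi^{n_k}(h),\phi^{n_k}(1)) \to (1,1)$, hence $(h,1)$ is a Li-Yorke pair of $\phi|_H$ with witness $1 \in H$ and the same sequences $n_l, n_k$. (No normality of $H$ is needed here: $\varphi$ is well defined on the coset space $G/H$ precisely because $\phi(H) \subset H$.)

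I do not expect a serious obstacle in this argument; the only point requiring care is that the specific formulation of ``Li-Yorke pair'' used in the paper — existence of a witness $c$ together with two subsequences realizing $(a,b)$ and $(c,c)$ in the limit — is transported faithfully both \emph{downward} along $\pi$ and \emph{into} $H$ via the translation $g \mapsto a^{-1}g$ read through the homomorphism identity $\phi^n(a^{-1}b) = \phi^n(a)^{-1}\phi^n(b)$. The key conceptual observation, and the reason the hypothesis on $\varphi$ is exactly what is needed, is that once $\varphi$ has no Li-Yorke pair any Li-Yorke pair of $\phi$ is forced to lie inside a single left coset of $H$, and group homogeneity then slides that coset onto $H$ itself.
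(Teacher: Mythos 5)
Your proof is correct and follows essentially the same route as the paper's: project to $G/H$ to force $\pi(a)=\pi(b)$, then transport the Li-Yorke data to the pair $(a^{-1}b,1)$ in $H$ via the homomorphism identity. The only cosmetic difference is that you obtain $\phi^{n_k}(h)\to 1$ directly from joint continuity of $(x,y)\mapsto x^{-1}y$, whereas the paper invokes a left-invariant distance for that step.
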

\begin{proof}
Of course, if $\phi|_H$ has a Li-Yorke pair, then $\phi$ has a Li-Yorke pair. Now assume $\phi|_H$ has no Li-Yorke pair. Let $a,b,c \in G$ such that
\begin{equation}
\left(\phi^{n_l}(a),\phi^{n_l}(b)\right) \to (a,b)
\qquad \mbox{and} \qquad
\left(\phi^{n_k}(a),\phi^{n_k}(b)\right) \to (c,c)
\end{equation}
Using the commutative diagram, it follows that
\begin{equation}
\left(\varphi^{n_l}(\pi(a)),\varphi^{n_l}(\pi(b))\right) \to (\pi(a),\pi(b))
\end{equation}
and that
\begin{equation}
\left(\varphi^{n_k}(\pi(a)),\varphi^{n_k}(\pi(b))\right) \to (\pi(c),\pi(c))
\end{equation}
Since $\varphi$ has no Li-Yorke pair, it follows that $\pi(a) = \pi(b)$. Hence $a = bh$, for some $h \in H$, and 
\begin{eqnarray*}
\phi^{n_l}(h) & = & \left(b^{-1}\phi^{n_l}(b)\right)^{-1}\left(b^{-1}\phi^{n_l}(b)\right)\phi^{n_l}(h) \\
& = & \left(b^{-1}\phi^{n_l}(b)\right)^{-1}b^{-1}\phi^{n_l}(a) \\
& \to & b^{-1}a \\
& = & h
\end{eqnarray*}
On the other hand, considering a left-invariant distance in $G$, we have that
\begin{eqnarray*}
d\left(\phi^{n_k}(h), 1\right) & = & d\left(\phi^{n_k}(b)\phi^{n_k}(h), \phi^{n_k}(b)\right) \\
& = & d\left(\phi^{n_k}(a), \phi^{n_k}(b)\right) \\
& \to & 0
\end{eqnarray*}
Thus we have that 
\begin{equation}
\left(\phi^{n_l}(h),\phi^{n_l}(1)\right) \to (h,1)
\qquad \mbox{and} \qquad
\left(\phi^{n_k}(h),\phi^{n_k}(1)\right) \to (1,1)
\end{equation}
Since $\phi|_H$ has no Li-Yorke pair, it follows that $h = 1$, showing that $\phi$ has no Li-Yorke pair.
\end{proof}

The following result reduces the problem of showing the nonexistence of Li-Yorke pairs to considering the restriction of an endomorphism to the solvable radical of the Lie group.

\begin{proposicao}\label{prop-phi-r-no-li-yorke-pair}
Let $G$ be a Lie group, $\phi: G \to G$ be a continuous endomorphism such that $\phi(G_0) = G_0$ and $R$ be the solvable radical of $G_0$. If $\phi|_R$ has no Li-Yorke pair, then $\phi^n$ has no Li-Yorke pair for some $n > 0$. 
\end{proposicao}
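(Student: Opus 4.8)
The plan is to climb the chain of normal subgroups $R \subseteq G_0 \subseteq G$, applying Proposition \ref{prop-li-yorke-G-H} twice, until the statement becomes a question about connected semisimple Lie groups, and then to settle that question using the structure theory.

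First I would record a simple fact: \emph{a continuous map $T$ of a discrete space has no Li-Yorke pair}. Indeed, in a discrete space $T^{n_l}(a)\to a$ forces $T^{n_l}(a)=a$ for $l$ large, so both points of a candidate Li-Yorke pair are periodic; but $T$ is a bijection on its set of periodic points, so $T^{n_k}(a)=T^{n_k}(b)$ would force $a=b$. Since $G_0$ is normal in $G$, $\phi(G_0)=G_0$, and $G/G_0$ is discrete, the endomorphism induced by $\phi$ on $G/G_0$ (and each of its powers) has no Li-Yorke pair; hence Proposition \ref{prop-li-yorke-G-H} applied to $\phi^n$ with $H=G_0$ shows that $\phi^n$ has a Li-Yorke pair if and only if $\phi^n|_{G_0}$ does. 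This reduces the proposition to the case $G$ connected.

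With $G$ connected, $R$ is the solvable radical, a characteristic subgroup of $G$, and since $\phi$ is now surjective, $\phi(R)$ is a connected solvable normal subgroup of $G$, hence $\phi(R)\subseteq R$; thus $\phi$ descends to a surjective endomorphism $\varphi$ of the connected semisimple group $G/R$ (Proposition \ref{radicals}). If I can find $m\ge 1$ with $\varphi^m$ having no Li-Yorke pair, then Proposition \ref{prop-li-yorke-G-H} applied to $\phi^m$ with $H=R$ reduces the existence of a Li-Yorke pair of $\phi^m$ to one of $\phi^m|_R$; but $\phi^m|_R$ has none, since $\phi|_R$ has none (a Li-Yorke pair of $\psi^m$ is one of $\psi$). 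So the whole proof reduces to the claim that \emph{a surjective continuous endomorphism $\varphi$ of a connected semisimple Lie group $S$ has no Li-Yorke pair after passing to a power}. For this I would argue: $\varphi$ is an open map, so $d\varphi$ is onto and hence an automorphism of $\mathfrak{s}$; the component group $\mathrm{Aut}(\mathfrak{s})/\mathrm{Inn}(\mathfrak{s})$ is finite, so some power satisfies $d(\varphi^m)\in \mathrm{Inn}(\mathfrak{s})=\mathrm{Ad}(S)$, say $d(\varphi^m)=\mathrm{Ad}(g)$; since $\varphi^m$ and the inner automorphism $C_g$ are endomorphisms of the connected group $S$ with the same differential, they coincide, $\varphi^m=C_g$. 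Writing $g=ehu$ for the real Jordan decomposition ($e$ elliptic, $h$ hyperbolic, $u$ unipotent, pairwise commuting) and passing through $\mathrm{Ad}$, the orbit $C_g^n(a)$ corresponds to $M\mapsto \mathrm{Ad}(g)^n M\,\mathrm{Ad}(g)^{-n}$ applied to $\mathrm{Ad}(a)$ in $\mathrm{End}(\mathfrak{s})$; a recurrent point of a linear map lies in its elliptic semisimple part, which forces $\mathrm{Ad}(a)$ to commute with $\mathrm{Ad}(h)$ and $\mathrm{Ad}(u)$. On the closed invariant set of such $a$ one has $C_g^n(a)=e^nae^{-n}$, so $C_g$ there is conjugation by the elliptic element $e$ and is therefore equicontinuous; using that $Z(S)$ is pointwise fixed by every $C_g^n$ this upgrades to distality, and a distal system has no Li-Yorke pair. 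As the two points of a Li-Yorke pair and their common proximal limit all lie in that set, $C_g$ has no Li-Yorke pair, proving the claim.

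The main obstacle is precisely this last claim: controlling the recurrent set of a conjugation on a possibly noncompact semisimple Lie group \emph{globally} rather than just near the identity, and coping with a possibly infinite discrete center; this is where the structure theory (the real Jordan decomposition together with the behaviour of the adjoint representation) is essential. The rest of the argument is a formal two-step descent along $R\subseteq G_0\subseteq G$ via Proposition \ref{prop-li-yorke-G-H}.
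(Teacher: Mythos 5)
Your two\hbox{-}step reduction along $R \subseteq G_0 \subseteq G$ via Proposition~\ref{prop-li-yorke-G-H} is exactly the skeleton of the paper's proof (the paper runs the discrete quotient $G/G_0$ last rather than first, which is immaterial). Where you diverge is the semisimple quotient $S = G_0/R$. The paper quotients once more, by $Z(S)$, lands in the adjoint group --- a \emph{linear} semisimple Lie group --- and then simply cites Proposition~10 of \cite{caldas-patrao:endomorfismos} to say that a power of the induced endomorphism is conjugate to the restriction of a linear map and hence has no Li--Yorke pair; the center $Z(S)$ is discrete, so the fibre contributes nothing, and Proposition~\ref{prop-li-yorke-G-H} finishes. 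You instead argue directly on $S$: after a power, $d\varphi^m \in \mathrm{Inn}(\mathfrak{s}) = \mathrm{Ad}(S)$, so $\varphi^m = C_g$ for some $g \in S$, and then you appeal to the real Jordan decomposition $g = ehu$, restriction to the recurrent set, and distality. This is a genuinely different and more self-contained route, and it buys independence from the cited Proposition~10.

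There is, however, a gap at the centre of your Jordan\hbox{-}decomposition step. From recurrence of $\mathrm{Ad}(a)$ you correctly deduce that $\mathrm{Ad}(a)$ commutes with $\mathrm{Ad}(h)$ and $\mathrm{Ad}(u)$; but this only gives $[a,h],[a,u] \in Z(S)$, not that $a$ commutes with $h$ and $u$ in $S$. A direct computation gives
\[
C_g^n(a) = e^n a e^{-n}\, z(a)^n, \qquad z(a) = [a,h]^{-1}[a,u]^{-1} \in Z(S),
\]
not $C_g^n(a) = e^n a e^{-n}$ as you assert. To salvage this you must either (i) pass to $S/Z(S)$ where the central factor disappears --- which is precisely the paper's route --- or (ii) note that recurrence of $a$ \emph{in $S$} forces $z(a)$ to be a torsion element, invoke that $Z(S)$ is a finitely generated abelian group so its torsion subgroup is finite, and pass to a further power to kill $z(a)$. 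You flag the infinite discrete centre as the ``main obstacle,'' which is the right instinct, but the asserted identity skips the needed bookkeeping; once that is supplied, your distality argument (equicontinuity of conjugation by the elliptic part plus $Z(S)$ being pointwise fixed) does go through.
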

\begin{proof}
Since $\phi(R) \subset R$, we can consider the following commutative diagram
\begin{displaymath}
    \xymatrix{
        G_0 \ar[r]^{\phi|_{G_0}} \ar[d]_\pi & G_0 \ar[d]^\pi \\
        S \ar[r]_\varphi       & S }
\end{displaymath}
where $S = G_0/R$ is a semi-simple Lie group, $\pi$ is the canonical projection and $\varphi$ is the endomorphism induced by $\phi$ on $S$. Since $\phi(G_0) = G_0$, it follows that $\varphi(S) = S$, which implies that $\varphi(Z(S)) \subset Z(S)$, where $Z(S)$ is the center of $S$. Hence we can also consider the following commutative diagram
\begin{displaymath}
    \xymatrix{
        S \ar[r]^\varphi \ar[d]_\rho & S \ar[d]^\rho \\
        S/Z(S) \ar[r]_\psi      & S/Z(S) }
\end{displaymath}
where $\rho$ is the canonical projection and $\psi$ is the endomorphism induced by $\varphi$ on $S/Z(S)$. We have that $S/Z(S)$ is isomorphic to the adjoint group of $S$, and hence it is a linear semi-simple Lie group. Thus $\psi^n$ is conjugated to the restriction of a linear map for some $n > 0$ (see Proposition 10 of \cite{caldas-patrao:endomorfismos}) and thus $\psi^n$ has no Li-Yorke pair. We also have that $Z(S)$ is discrete, and hence $\varphi^n|_{Z(S)}$ has no Li-Yorke pair. By Proposition \ref{prop-li-yorke-G-H}, it follows that $\varphi^n$ has no Li-Yorke pair. Since $\phi|_R$ has no Li-Yorke pair, it follows that $\phi^n|_R$ has no Li-Yorke pair. Hence, by Proposition \ref{prop-li-yorke-G-H}, it follows that $\phi^n|_{G_0}$ has no Li-Yorke pair. Finally, since $\phi(G_0) = G_0$, we can consider the following commutative diagram
\begin{displaymath}
    \xymatrix{
        G \ar[r]^\phi \ar[d]_\pi & G \ar[d]^\pi \\
        C \ar[r]_{\gamma}       & C }
\end{displaymath}
where $C = G/G_0$ is the group of the connected components of $G$, $\pi$ is the canonical projection and $\gamma$ is the endomorphism induced by $\phi$ on $C$. Since $C$ is a discrete group, it follows that $\gamma$ has no Li-Yorke pair. Hence $\gamma^n$ has no Li-Yorke pair and, since $\phi^n|_{G_0}$ has no Li-Yorke pair, by Proposition \ref{prop-li-yorke-G-H}, it follows that $\phi^n$ has no Li-Yorke pair.
\end{proof}

The next result is a consequence of a previous result of this section and the main result of the previous section.

\begin{proposicao}\label{prop-r-n-simly-connected-no-li-yorke-pair}
Let $R$ be a connected solvable group, $\phi: R \to R$ be a continuous surjective endomorphism and $N$ be the nilpotent radical of $R$. If $N$ is simply-connected, then $\phi$ has no Li-Yorke pair. 
\end{proposicao}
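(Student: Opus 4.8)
The plan is to climb from the nilpotent radical $N$ up to $R$ through the intermediate abelian quotient $A = R/N$, using Theorem~\ref{teo-toral-finite-order} to control the toral part and Proposition~\ref{prop-li-yorke-G-H} to transfer the absence of Li-Yorke pairs across the two projections $R \to A$ and $A \to A/T(A)$. First, since $N$ is the nilpotent radical of $R$ it is a characteristic subgroup, so $\phi(N) \subset N$, and since $R' \subset N$ the quotient $A = R/N$ is a connected abelian Lie group; let $\varphi$ be the endomorphism it induces, which is surjective because $\phi$ is. As a connected abelian Lie group, $A$ is isomorphic to the product of a vector space by its maximal torus $T(A)$; since $T(A)$ is characteristic, $\varphi(T(A)) \subset T(A)$, the quotient $A/T(A)$ is isomorphic to a vector space, and the induced endomorphism $\overline{\varphi}$ is a linear map (being a continuous additive self-map of a finite-dimensional real vector space), so $\overline{\varphi}$ has no Li-Yorke pair (see \cite{caldas-patrao:endomorfismos}).

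Next I apply Proposition~\ref{prop-li-yorke-G-H} to the projection $A \to A/T(A)$: since $\overline{\varphi}$ has no Li-Yorke pair, $\varphi$ has a Li-Yorke pair if and only if $\varphi|_{T(A)}$ does. By Theorem~\ref{teo-toral-finite-order}, $\varphi|_{T(A)}$ has finite order, hence $\left(\varphi|_{T(A)}\right)^m = \mathrm{id}$ for some $m > 0$ and in particular $\varphi|_{T(A)}$ is invertible. A map of finite order admits no Li-Yorke pair: if $(a,b)$ were one, the set $\{(\varphi^n(a),\varphi^n(b)) : n \geq 0\}$ would be finite, hence closed and discrete, so the convergence $(\varphi^{n_k}(a),\varphi^{n_k}(b)) \to (c,c)$ would force $\varphi^{n_k}(a) = \varphi^{n_k}(b)$ for large $k$, and the invertibility of $\varphi|_{T(A)}$ would then give $a = b$, a contradiction. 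Therefore $\varphi|_{T(A)}$, and hence $\varphi$, has no Li-Yorke pair.

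Finally, since $N$ is simply-connected and nilpotent, the exponential $\exp_N : \mathfrak{n} \to N$ is a diffeomorphism and, by naturality of $\exp$, $\phi|_N = \exp_N \circ d(\phi|_N)_1 \circ \exp_N^{-1}$; thus $\phi|_N$ is conjugate to the linear map $d(\phi|_N)_1$ and so has no Li-Yorke pair. Applying Proposition~\ref{prop-li-yorke-G-H} once more, now to the projection $\pi : R \to R/N = A$ with the commutative square relating $\phi$ and $\varphi$ (and using that $\varphi$ has no Li-Yorke pair by the previous paragraph), we conclude that $\phi$ has a Li-Yorke pair if and only if $\phi|_N$ does; since it does not, $\phi$ has no Li-Yorke pair.

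The routine parts are the naturality of $\exp$ and the (known) fact that linear and finite-order maps have no Li-Yorke pairs; all the real content sits in Theorem~\ref{teo-toral-finite-order}, which is exactly what excludes the torus $T(A)$ as a source of Li-Yorke pairs, so the only thing to check with some care is that its hypotheses — surjectivity of $\phi$ and simple-connectedness of $N$ — are precisely the ones at hand.
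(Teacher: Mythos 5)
Your proof is correct and follows essentially the same route as the paper: first pass to $A = R/N$ and then to $A/T(A)$, handle the vector-space quotient and the simply-connected nilpotent $N$ by the linear-conjugacy argument, dispose of $T(A)$ via Theorem~\ref{teo-toral-finite-order}, and transfer back with Proposition~\ref{prop-li-yorke-G-H} applied twice. The only difference is that you spell out (correctly) why a finite-order map has no Li-Yorke pairs, a step the paper treats as immediate.
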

\begin{proof}
Since $\phi(N) \subset N$, we can consider the following commutative diagram
\begin{displaymath}
    \xymatrix{
        R \ar[r]^\phi \ar[d]_\pi & R \ar[d]^\pi \\
        A \ar[r]_\varphi       & A }
\end{displaymath}
where $A = R/N$ is an abelian Lie group, $\pi$ is the canonical projection and $\varphi$ is the endomorphism induced by $\phi$ on $A$. Since $N$ is a nilpotent and simply-connected Lie group, we have that $\phi|_N$ is conjugated to a linear map and thus has no Li-Yorke pair. Since $\varphi(T(A)) \subset T(A)$, we can also consider the following commutative diagram
\begin{displaymath}
    \xymatrix{
        A \ar[r]^\varphi \ar[d]_\rho & A \ar[d]^\rho \\
        A/T(A) \ar[r]_\psi      & A/T(A) }
\end{displaymath}
where $T(A)$ is the toral component of $A$, $\rho$ is the canonical projection and $\psi$ is the endomorphism induced by $\varphi$ on $A/T(A)$. Since $A/T(A)$ is an abelian and simply-connected Lie group, we have that $\psi$ is conjugated to a linear map and thus has no Li-Yorke pair. By Theorem \ref{teo-toral-finite-order}, it follows that $\varphi|_{T(A)}$ has finite order an thus has no Li-Yorke pair. Hence, by Proposition \ref{prop-li-yorke-G-H}, it follows that $\varphi$ has no Li-Yorke pair and also that $\phi$ has no Li-Yorke pair.
\end{proof}

Now we obtain the main result of this section, which is fundamental to the computation of the topological entropy in the next section.

\begin{corolario}\label{cor-tg-trivial-no-li-yorke-pair}
Let $G$ be a Lie group and $\phi: G \to G$ be a continuous endomorphism such that $\phi(G_0) = G_0$. If $T(G)$ is trivial, then $\phi^n$ has no Li-Yorke pair for some $n > 0$.  
\end{corolario}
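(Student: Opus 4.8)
The plan is to reduce the statement to Proposition \ref{prop-phi-r-no-li-yorke-pair} and Proposition \ref{prop-r-n-simly-connected-no-li-yorke-pair}. Let $R$ be the solvable radical of $G_0$. By Proposition \ref{prop-phi-r-no-li-yorke-pair}, it suffices to show that $\phi|_R$ has no Li-Yorke pair, since then $\phi^n$ has no Li-Yorke pair for some $n > 0$. To invoke Proposition \ref{prop-r-n-simly-connected-no-li-yorke-pair}, which applies to a connected solvable group with simply-connected nilpotent radical, I first need to understand what the hypothesis $T(G)$ trivial gives us. Here $T(G)$ means $T(G_0)$ (the maximal torus of the center of the connected component), and by Proposition \ref{prop-toral-components-coincide} we have $T(G_0) = T(R) = T(N)$, where $N$ is the nilpotent radical of $G_0$ (equivalently of $R$). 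So the hypothesis says $T(R)$ is trivial; since $N$ is nilpotent and its toral component $T(N) = T(R)$ is trivial, Proposition 8 of \cite{caldas-patrao:endomorfismos} (already cited in the proof of Proposition \ref{prop-toral-components-trivial}) tells us that $N$ is simply-connected.

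Once $N$ is simply-connected, I would like to apply Proposition \ref{prop-r-n-simly-connected-no-li-yorke-pair} directly to $\phi|_R$. The one gap is that Proposition \ref{prop-r-n-simly-connected-no-li-yorke-pair} requires $\phi|_R$ to be \emph{surjective}, whereas the hypothesis here only gives $\phi(G_0) = G_0$. However, $\phi(G_0) = G_0$ implies $\phi(R) = R$: indeed $\phi(R)$ is a connected solvable normal subgroup of $\phi(G_0) = G_0$, so $\phi(R) \subset R$; and since $G_0/R$ is semi-simple, the induced surjective endomorphism on $G_0/R$ together with $\phi(R)\subset R$ would, if $\phi(R)\subsetneq R$, contradict the fact that a surjective endomorphism of $G_0$ must carry a subgroup with semi-simple quotient onto a subgroup with the same property — more carefully, one checks that $\pi(\phi(R))$ is normal in $S = G_0/R$, and $\varphi \circ \pi = \pi \circ \phi$ with $\varphi$ surjective forces $R \subset \phi(R) R = \phi(R)$ by a dimension and normality argument using that $R$ is the solvable radical. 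I would spell out this routine step: from $\phi(G_0)=G_0$ one gets $\phi(R)=R$ because $R$ is characteristic in $G_0$ and $\phi$ restricted to $G_0$ is surjective, so its restriction to the characteristic solvable radical is surjective onto the solvable radical.

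With $\phi|_R \colon R \to R$ a continuous surjective endomorphism of a connected solvable group whose nilpotent radical $N$ is simply-connected, Proposition \ref{prop-r-n-simly-connected-no-li-yorke-pair} gives that $\phi|_R$ has no Li-Yorke pair. Then Proposition \ref{prop-phi-r-no-li-yorke-pair} immediately yields $n > 0$ with $\phi^n$ having no Li-Yorke pair, completing the proof. The only genuinely non-bookkeeping point is the passage from $T(G)$ trivial to $N$ simply-connected, which is exactly the content invoked at the end of Proposition \ref{prop-toral-components-trivial} (via $T(G) = T(N)$ and Proposition 8 of \cite{caldas-patrao:endomorfismos}), so I expect no real obstacle; the proof is essentially an assembly of the three preceding results plus the elementary observation that $\phi(G_0)=G_0$ forces $\phi(R)=R$.
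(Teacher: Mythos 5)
Your proposal is correct and follows essentially the same route as the paper: reduce to showing the nilpotent radical $N$ of the solvable radical $R$ is simply-connected, check that $\phi|_R$ is surjective (via $R$ being characteristic in $G_0$), then chain Propositions \ref{prop-r-n-simly-connected-no-li-yorke-pair} and \ref{prop-phi-r-no-li-yorke-pair}. The only cosmetic difference is that you deduce simple-connectedness of $N$ directly from $T(N)$ trivial via Proposition 8 of \cite{caldas-patrao:endomorfismos}, whereas the paper routes through the isomorphism $R \simeq R/T(R)$ and the last assertion of Proposition \ref{prop-toral-components-trivial} — the underlying fact is the same.
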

\begin{proof}
By Proposition \ref{prop-toral-components-coincide}, it follows that $T(R)$ is trivial and thus $R \simeq R/T(R)$. Hence it follows from Proposition \ref{prop-toral-components-trivial} that the nilpotent radical of $R$ is simply-connected. Since $\phi(G_0) = G_0$, it also follows that $\phi|_R$ is a continuous surjective endomorphism. By Proposition \ref{prop-r-n-simly-connected-no-li-yorke-pair}, it follows that $\phi|_R$  has no Li-Yorke pair. The result then follows from Proposition \ref{prop-phi-r-no-li-yorke-pair}.
\end{proof}

\section{Topological entropy}\label{topological}

In this final section, we prove the main results of this paper, about the computation of the topological entropy of endomorphisms of Lie groups. We start with a partial generalization to noncompact spaces of a result due to Bowen (see Theorem 19 of \cite{bowen-endomorphism}).

\begin{proposicao}\label{prop-bowen-generalizado}
Let $X$ and $Y$ be locally compact metrizable topological spaces. If the following diagram commutes
\begin{displaymath}
    \xymatrix{
        X \ar[r]^\phi \ar[d]_\pi & X \ar[d]^\pi \\
        Y \ar[r]_\varphi       & Y }
\end{displaymath}
and the maps $\pi$, $\phi$, and $\varphi$ are continuous proper maps, then
\begin{equation}
h(\phi) \leq h(\varphi) + \sup_{y \in Y} h_{d_X}\left(\phi,\pi^{-1}(y)\right)
\end{equation}
where $d_X$ is a distance induced by a distance of the one-point compactification. Moreover, if $\pi: X \to Y$ is a $T$-principal bundle, where $T$ is a compact group, and there exists $\tau: T \to T$ such that
\begin{equation}
\phi(xt) = \phi(x)\tau(t)
\end{equation}
for all $x \in X$ and $t \in T$, then
\begin{equation}
h(\phi) \leq h(\varphi) + h(\tau)
\end{equation}
\end{proposicao}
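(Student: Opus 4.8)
The plan is to pass to the one-point compactifications, so that the first inequality becomes an instance of Bowen's fibered-entropy inequality on compact metric spaces, and then to prove the principal-bundle refinement by a direct estimate on each fiber using only the compactness of $T$, the uniform continuity of the $T$-action, and the equivariance of $\phi$.

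For the first inequality, I would argue as follows. Since $\pi$, $\phi$ and $\varphi$ are proper, they extend to continuous maps $\pi^{+}: X^{+}\to Y^{+}$, $\phi^{+}: X^{+}\to X^{+}$, $\varphi^{+}: Y^{+}\to Y^{+}$ on the one-point compactifications, fixing the added points $\infty_{X}\in X^{+}$ and $\infty_{Y}\in Y^{+}$ and still satisfying $\pi^{+}\phi^{+}=\varphi^{+}\pi^{+}$; moreover $(\pi^{+})^{-1}(\infty_{Y})=\{\infty_{X}\}$, because $\pi$ takes values in $Y$. Properness also gives $h(\phi)=h(\phi^{+})$ and $h(\varphi)=h(\varphi^{+})$ (see \cite{patrao:entropia,caldas-patrao:entropia}). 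Now $X^{+}$ and $Y^{+}$ are compact metric spaces, $d_{X}$ being by hypothesis the restriction of a metric $d_{X^{+}}$ on $X^{+}$, so Bowen's fibered-entropy inequality for commuting maps on compact metric spaces (\cite{bowen-endomorphism}) applies to $\pi^{+},\phi^{+},\varphi^{+}$ and yields
\[
h(\phi^{+})\;\le\; h(\varphi^{+})+\sup_{y\in Y^{+}}h_{d_{X^{+}}}\bigl(\phi^{+},(\pi^{+})^{-1}(y)\bigr).
\]
The term at $y=\infty_{Y}$ vanishes, since $(\pi^{+})^{-1}(\infty_{Y})$ is a single point; and for $y\in Y$ the fiber $(\pi^{+})^{-1}(y)=\pi^{-1}(y)$ lies in $X$, its forward $\phi^{+}$-orbit stays in $X$, and $d_{X^{+}}$ restricts to $d_{X}$ there, so that term equals $h_{d_{X}}(\phi,\pi^{-1}(y))$. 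This proves the first inequality.

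For the second part it suffices to show $h_{d_{X}}(\phi,\pi^{-1}(y))\le h(\tau)$ for every $y\in Y$. First, the $T$-action on $X$ extends continuously to $X^{+}$ with $\infty_{X}$ as a fixed point, since $KT$ is compact for every compact $K\subseteq X$; the identity $\phi^{+}(zt)=\phi^{+}(z)\tau(t)$ then holds on all of $X^{+}$, and iterating it gives $\phi^{n}(zt)=\phi^{n}(z)\tau^{n}(t)$. Because the action map $X^{+}\times T\to X^{+}$ is continuous on a compact space it is uniformly continuous, so for each $\delta>0$ there is $\delta'>0$, independent of $z$, with $d_{X^{+}}(zt,zt')<\delta$ whenever $d_{T}(t,t')<\delta'$. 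Now fix $y\in Y$, pick $x\in\pi^{-1}(y)$ so that $t\mapsto xt$ is a homeomorphism of $T$ onto $\pi^{-1}(y)$, and let $F\subseteq T$ be a finite $(n,\delta')$-spanning set for $\tau$. Given $xt\in\pi^{-1}(y)$, choose $f\in F$ with $d_{T}(\tau^{j}t,\tau^{j}f)<\delta'$ for $0\le j<n$; then $d_{X^{+}}(\phi^{j}(xt),\phi^{j}(xf))=d_{X^{+}}\bigl((\phi^{j}x)\tau^{j}t,(\phi^{j}x)\tau^{j}f\bigr)<\delta$, so $\{xf:f\in F\}$ is an $(n,\delta)$-spanning set for $\phi$ on $\pi^{-1}(y)$. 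Hence the span numbers satisfy $r_{n}(\delta,\pi^{-1}(y))\le r_{n}(\delta',T)$, and letting $\delta\to0$, and hence $\delta'\to0$, gives $h_{d_{X}}(\phi,\pi^{-1}(y))\le h(\tau)$, uniformly in $y$. Substituting into the first inequality yields $h(\phi)\le h(\varphi)+h(\tau)$.

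I expect the main obstacle to be the reduction $h(\phi)=h(\phi^{+})$ together with verifying that the compactified maps genuinely satisfy all hypotheses of Bowen's compact-space theorem (continuity and the commuting relation at infinity, and metrizability of $X^{+}$ and $Y^{+}$); once one is in the compact world the fibered inequality is classical. The structural point worth flagging is that $\pi^{+}: X^{+}\to Y^{+}$ is \emph{not} a principal bundle---the fiber over $\infty_{Y}$ collapses to a point---so Bowen's principal-bundle theorem cannot be quoted directly for $\phi^{+}$; this is precisely why the second part is carried out via the explicit fiber estimate above, which needs nothing beyond compactness of $T$, uniform continuity of the action, and the equivariance identity.
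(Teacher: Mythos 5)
Your proof is correct and follows essentially the same approach as the paper: pass to one-point compactifications and invoke Bowen's fibered inequality for the first part, then for the second part extend the $T$-action continuously to $X^{+}$ with $\infty$ fixed, use uniform continuity on the compact total space, and transport an $(n,\delta')$-spanning set for $\tau$ to an $(n,\delta)$-spanning set for $\phi$ on each fiber $\pi^{-1}(y)=xT$. Your justification of the continuous extension via the compactness of $KT$ is a minor repackaging of the paper's sequential argument, and your closing observation that $\pi^{+}$ is not a principal bundle over $Y^{+}$ (which forces the direct fiber estimate rather than a citation of Bowen's principal-bundle theorem) makes explicit a point the paper leaves implicit.
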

\begin{proof}
We can consider $\widetilde{X}$ and $\widetilde{Y}$, the one-point compactification of $X$ and $Y$, and the following diagram
\begin{displaymath}
    \xymatrix{
        \widetilde{X} \ar[r]^{\widetilde{\phi}} \ar[d]_{\widetilde{\pi}} & \widetilde{X} \ar[d]^{\widetilde{\pi}} \\
        \widetilde{Y} \ar[r]_{\widetilde{\varphi}}       & \widetilde{Y} }
\end{displaymath}
where $\widetilde{\pi}$, $\widetilde{\phi}$, and $\widetilde{\varphi}$ are continuous maps, given by the natural extension of the continuous proper maps $\pi$, $\phi$, and $\varphi$. Now we can apply Bowen's result for compact spaces and get
\begin{equation}
h(\widetilde{\phi}) \leq h(\widetilde{\varphi}) + \sup_{y \in \widetilde{Y}} h_{d_{\widetilde{X}}}\left(\widetilde{\phi},\widetilde{\pi}^{-1}(y)\right)
\end{equation}
where $d_{\widetilde{X}}$ is a distance of $\widetilde{X}$.
Since $\widetilde{\pi}^{-1}(\infty) = \infty$ and $\widetilde{\phi}^{-1}(\infty) = \infty$, it follows that
\begin{equation}
h_{d_{\widetilde{X}}}\left(\widetilde{\phi},\widetilde{\pi}^{-1}(\infty)\right) = 0
\end{equation}
and thus
\begin{equation}
\sup_{y \in \widetilde{Y}} h_{d_{\widetilde{X}}}\left(\widetilde{\phi},\widetilde{\pi}^{-1}(y)\right) 
=
\sup_{y \in Y} h_{d_X}\left(\phi,\pi^{-1}(y)\right)
\end{equation}
where $d_X$ is a distance induced by $d_{\widetilde{X}}$. The result then follows, since $h(\widetilde{\phi}) = h(\phi)$ and $h(\widetilde{\varphi}) = h(\varphi)$ (see Proposition 2.3 of \cite{patrao:entropia}).

For the second part of the proposition, let $p: X \times T \to X$ be the continuous free action associated to the $T$-principal bundle $\pi: X \to Y$. Define $\widetilde{p}: \widetilde{X} \times T \to \widetilde{X}$ an extension of $p$ putting $\infty t = \infty$, for all $t \in T$. We claim that $\widetilde{p}$ is a continuous map. In fact, let $x_n \to \infty$ and $t_n \to t$. If $x_nt_n$ does not converge to $\infty$, there exist $\varepsilon > 0$ and a subsequence $x_{n_k}t_{n_k} \in K$, where $K = \widetilde{X} \backslash B_{d_{\widetilde{X}}}(\infty, \varepsilon)$. Since $K$ is compact, there exist $x \in K$ and $x_{n_{k_j}}t_{n_{k_j}} \to x$. Hence
\begin{equation}
x_{n_{k_j}} = x_{n_{k_j}}t_{n_{k_j}}t_{n_{k_j}}^{-1} \to xt^{-1}
\end{equation}
which contradicts $x_n \to \infty$, showing that $\widetilde{p}$ is continuous. Thus, given a distance $d_T$ of $T$ and $\varepsilon > 0$, there exists $\delta > 0$ such that, if $d_T(t,t') < \delta$, then $d_{\widetilde{X}}(\widetilde{x}t,\widetilde{x}t') < \varepsilon$, for all $\widetilde{x} \in \widetilde{X}$. It follows that, if $d_T(t,t') < \delta$, then $d_X(xt,xt') < \varepsilon$, for all $x \in X$. Now, given $y \in Y$, consider $x \in \pi^{-1}(y)$. It follows that $d_T(\tau^k(t),\tau^k(t')) < \delta$ implies that
\begin{equation}
d_X(\phi^k(xt),\phi^k(xt)) = d_X(\phi^k(x)\tau^k(t),\phi^k(x)\tau^k(t')) < \varepsilon
\end{equation}
Hence, if $T_n$ is an $(n,\delta)$-spanning set of $T$ for $\tau$, then $xT_n$ is an $(n,\varepsilon)$-spanning set of $\pi^{-1}(y) = xT$ for $\phi$. Thus $s_n^\phi\left(\varepsilon,\pi^{-1}(y)\right) \leq s_n^\tau(\delta,T)$, where $s_n^T(\varepsilon,Y)$ is the smallest cardinality of a $(n,\varepsilon)$-spanning set of $Y \subset X$ for the continuous map $T: X \to X$. This implies that 
\begin{equation}
h_{d_X}\left(\phi,\pi^{-1}(y)\right) \leq h_{d_T}(\tau) = h(\tau)
\end{equation}
where the last equality follows, since $T$ is compact. Now the second part of the proposition follows directly from the first one.
\end{proof}

Now we prove the main result of the paper, which shows that the computation of the topological entropy of a continuous endomorphism of a Lie group reduces to the classical formula for the topological entropy of a continuous endomorphism of a torus.

\begin{teorema}\label{theo-entropia-endomorphism}
Let $G$ be a Lie group and $\phi: G \to G$ be a continuous endomorphism. Then we have that
\begin{equation}
h\left(\phi\right) = h\left(\phi|_{T(G_\phi)}\right)
\end{equation}
where $G_\phi$ is the maximal connected subgroup of $G$ such that $\phi(G_\phi) = G_\phi$, and $T(G_\phi)$ is the maximal torus in the center of $G_\phi$. In particular, there always exists a $\phi$-invariant probability measure of maximal entropy.
\end{teorema}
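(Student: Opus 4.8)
The plan is to reduce, in two stages, to the classical computation of the topological entropy of a continuous endomorphism of a torus. For the first stage I set $G_\phi=\bigcap_{n\ge 1}\phi^n(G_0)$: since the dimensions of the connected subgroups $\phi^n(G_0)$ cannot increase, this chain stabilizes, $G_\phi$ is a closed connected subgroup, $\phi|_{G_\phi}$ is a surjective endomorphism, and $G_\phi$ is the maximal connected subgroup with $\phi(G_\phi)=G_\phi$. Using the identity $h(\phi)=h(\phi|_{\overline{\mathcal R_\phi}})$ recalled in the introduction — and the fact that on a component of $G$ whose class in $G/G_0$ is periodic a suitable power of $\phi$ becomes an affine map of a translate of $G_0$, hence has the entropy of its linear part — one sees that every recurrent point contributes only through $G_\phi$, so that $h(\phi)=h(\phi|_{G_\phi})$ and the right-hand side of the theorem is unchanged. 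From now on I assume $G$ connected and $\phi$ surjective, and must prove $h(\phi)=h(\phi|_{T(G)})$.

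The inequality $h(\phi|_{T(G)})\le h(\phi)$ is clear, since $T(G)$ is a compact $\phi$-invariant set. For the reverse inequality I use the principal $T(G)$-bundle $\pi\colon G\to G/T(G)$, the endomorphism $\varphi$ induced by $\phi$ on $G/T(G)$, and $\tau=\phi|_{T(G)}$, so that $\phi(xt)=\phi(x)\tau(t)$ for $x\in G$ and $t\in T(G)$. These maps are proper: a surjective endomorphism of a connected Lie group lifts to an automorphism $\widetilde\phi$ of the universal cover, and then $\ker\phi\cong\widetilde\phi^{-1}(\widetilde\Gamma)/\widetilde\Gamma$ is finite because $\widetilde\Gamma$ has finite index (being of the same rank) in the finitely generated abelian group $\widetilde\phi^{-1}(\widetilde\Gamma)\cong\widetilde\Gamma$. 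Thus Proposition~\ref{prop-bowen-generalizado} applies and yields $h(\phi)\le h(\varphi)+h(\tau)$. Now $G/T(G)$ is connected, $\varphi$ is surjective, and $T(G/T(G))$ is trivial by Proposition~\ref{prop-toral-components-trivial}; hence Corollary~\ref{cor-tg-trivial-no-li-yorke-pair} provides $m>0$ such that $\varphi^m$ has no Li-Yorke pair, and using the relation between the nonexistence of Li-Yorke pairs and the vanishing of topological entropy (see \cite{caldas-patrao:entropia}) together with $h(\varphi^m)=m\,h(\varphi)$ we conclude $h(\varphi)=0$. Therefore $h(\phi)\le h(\tau)=h(\phi|_{T(G)})$, and combined with the trivial inequality this gives equality; undoing the first reduction, $h(\phi)=h(\phi|_{T(G_\phi)})$.

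For the last assertion, replace $T(G_\phi)$ by the subtorus $\mathbb{T}=\bigcap_{n\ge 1}\phi^n(T(G_\phi))$, on which $\phi$ acts surjectively and with the same entropy. Then $\phi|_{\mathbb{T}}$ is a surjective endomorphism of a torus, its normalized Haar measure $\lambda$ is $\phi$-invariant, and $h_\lambda(\phi|_{\mathbb{T}})=h(\phi|_{\mathbb{T}})$ by the classical computation. Pushing $\lambda$ forward under the embedding $\mathbb{T}\hookrightarrow G$ (whose image is compact, hence closed in $G$) gives a $\phi$-invariant Borel probability measure $\mu$ on $G$ with $h_\mu(\phi)=h(\phi|_{\mathbb{T}})=h(\phi)$, which by the generalized variational principle is a measure of maximal entropy.

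The main obstacles I anticipate within this proof are the bookkeeping of the first reduction — showing $G_\phi$ is closed and that the non-identity components of $G$ do not contribute entropy beyond $\phi|_{G_0}$ — and the precise verification that Proposition~\ref{prop-bowen-generalizado} applies, namely properness of $\phi$ and $\varphi$ and the principal bundle structure carrying $\tau$, together with correctly importing the implication ``no Li-Yorke pair $\Rightarrow$ zero topological entropy'' for proper maps of locally compact metric spaces. The deeper structural input, that triviality of $T(G)$ forces the absence of Li-Yorke pairs, is already packaged in Corollary~\ref{cor-tg-trivial-no-li-yorke-pair} and behind it in the main technical Theorem~\ref{teo-toral-finite-order}, which are available here.
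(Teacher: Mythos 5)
Your second stage — the connected, surjective case — is essentially the paper's argument and is correct: the properness check via lifting to the universal cover and observing that $\widetilde\Gamma$ has finite index in $\widetilde\phi^{-1}(\widetilde\Gamma)$ is a clean alternative to the paper's properness verification, and the application of Proposition~\ref{prop-bowen-generalizado} together with Corollary~\ref{cor-tg-trivial-no-li-yorke-pair} to kill $h(\varphi)$ is exactly what the paper does. The treatment of the measure of maximal entropy also works, though you can avoid introducing $\mathbb{T}$: since $\phi(G_\phi)=G_\phi$ with finite kernel, one can check that $\phi|_{T(G_\phi)}$ is already surjective, and the paper simply cites Bowen for the existence of a measure of maximal entropy on the torus.

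The genuine gap is the first-stage reduction. You assert $h(\phi)=h(\phi|_{G_\phi})$, but what you offer in support — that $h(\phi)=h(\phi|_{\overline{\mathcal R_\phi}})$, that a suitable power of $\phi$ is affine on each periodic coset of $G_0$, and that affine maps have the entropy of their linear part — does not by itself yield this. The closure $\overline{\mathcal R_\phi}$ generally meets infinitely many cosets with unbounded period, so the bound you would obtain is coset by coset (and for different powers $\phi^{k}$), and there is no direct way to assemble these into a bound on $h(\phi)$ without further argument; you also need closedness of $G_\phi$ and properness of $\phi$ on the relevant set. You flag this as a bookkeeping obstacle, but the main argument leaves it open, and note that $h(\phi)=h(\phi|_{G_\phi})$ is a consequence of the theorem you are trying to prove, so using it as a reduction is circular unless established independently. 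The paper sidesteps this entirely: after passing to $H=\phi^n(G)$ via (a variant of) Lemma~4.1 of \cite{caldas-patrao:entropia}, it restricts not to $G_\phi$ but to $\pi^{-1}(\mathcal P_\varphi)$ — a closed, typically \emph{disconnected} subgroup containing all of the recurrent set — proves that $\phi$ is proper there because its kernel sits inside the finite kernel of $\phi|_{G_\phi}$, and applies Proposition~\ref{prop-bowen-generalizado} and Corollary~\ref{cor-tg-trivial-no-li-yorke-pair} directly to that disconnected group; the corollary is deliberately stated for disconnected $G$ (only requiring $\phi(G_0)=G_0$) precisely so that the non-identity components never have to be compared one by one. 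To repair your proof you would need to run your bundle and Li-Yorke argument on $\pi^{-1}(\mathcal P_\varphi)$ rather than on $G_\phi$.
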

\begin{proof}
By an argument similar to the proof of Lemma 4.1 of \cite{caldas-patrao:entropia}, there is $n > 0$ such that, if $H = \phi^n(G)$ and $H_0 = \phi^n(G_0) = G_\phi$, then $h(\phi) = h\left(\phi|_H\right)$ and $\phi(G_\phi) = G_\phi$. It follows that $G_\phi$ is the maximal connected subgroup of $G$ such that $\phi(G_\phi) = G_\phi$. Thus, we can consider the following commutative diagram
\begin{displaymath}
    \xymatrix{
        H \ar[r]^{\phi|_H} \ar[d]_\pi & H \ar[d]^\pi \\
        H/H_0 \ar[r]_\varphi       & H/H_0 }
\end{displaymath}
where $\pi$ is the canonical projection and $\varphi$ is the endomorphism induced by $\phi$ on $H/H_0$. Since $\pi$ is a continuous map, it follows that $\pi\left(\mathcal{R}_{\phi|_H}\right) \subset \mathcal{R}_\varphi$. Since $H/H_0$ is discrete, it follows that $\mathcal{R}_\varphi = \mathcal{P}_\varphi$, where $\mathcal{P}_\varphi$ is the set of periodic points of $\varphi$. Thus we have that $\mathcal{R}_{\phi|_H} \subset \pi^{-1}\left(\mathcal{P}_\varphi\right)$, which implies that $\overline{\mathcal{R}_{\phi|_H}} \subset \pi^{-1}\left(\mathcal{P}_\varphi\right)$, since $\mathcal{P}_\varphi$ is a closed set in $H/H_0$. Hence it follows that $h\left(\phi|_H\right) = h\left(\phi|_{\pi^{-1}\left(\mathcal{P}_\varphi\right)}\right)$. We have that $\mathcal{P}_\varphi$ is a subgroup of $H/H_0$. In fact, if $x, y \in \mathcal{P}_\varphi$, we have that $\varphi^l\left(x\right) = x$ and also that $\varphi^m\left(y\right) = y$, so that $\varphi^{lm}\left(xy\right) = \varphi^{lm}\left(x\right)\varphi^{lm}\left(y\right) = xy$. We also have that $\varphi|_{\mathcal{P}_\varphi}$ is an isomorphism. In fact, that $\varphi|_{\mathcal{P}_\varphi}$ is onto is immediate. Let $x \in \mathcal{P}_\varphi$ be such that $\varphi\left(x\right) = 1$. Since $\varphi^l\left(x\right) = x$, it follows that
\begin{equation}
x = \varphi^{l-1}\left(\varphi\left(x\right)\right) = \varphi^{l-1}\left(1\right) = 1
\end{equation}
showing that $\varphi|_{\mathcal{P}_\varphi}$ is injective. Finally, we have that $\phi|_{\pi^{-1}\left(\mathcal{P}_\varphi\right)}$ is a proper map, which is equivalent to show that the kernel of $\phi|_{\pi^{-1}\left(\mathcal{P}_\varphi\right)}$ is finite. Let $h \in \pi^{-1}\left(\mathcal{P}_\varphi\right)$ be such that $\phi\left(h\right) = 1$. It follows that $\pi\left(h\right) \in \mathcal{P}_\varphi$ and also that $\varphi\left(\pi\left(h\right)\right) = 1$. Since $\varphi|_{\mathcal{P}_\varphi}$ is an isomorphism, it follows that $\pi\left(h\right) = 1$, which implies that $h \in H_0 = G_\phi$. Hence the kernel of $\phi|_{\pi^{-1}\left(\mathcal{P}_\varphi\right)}$ is contained in the kernel of $\phi|_{G_\phi}$, which is finite, since $\phi(G_\phi) = G_\phi$ and $G_\phi$ is connected. Since $\phi\left(T(G_\phi)\right) \subset T(G_\phi)$, we can consider the following commutative diagram
\begin{displaymath}
    \xymatrix{
        \pi^{-1}\left(\mathcal{P}_\varphi\right) \ar[r]^{\phi|_{\pi^{-1}\left(\mathcal{P}_\varphi\right)}} \ar[d]_\pi & \pi^{-1}\left(\mathcal{P}_\varphi\right) \ar[d]^\pi \\
        \pi^{-1}\left(\mathcal{P}_\varphi\right)/T(G_\phi) \ar[r]_\psi       & \pi^{-1}\left(\mathcal{P}_\varphi\right)/T(G_\phi) }
\end{displaymath}
where $\pi$ is the canonical projection and $\psi$ is the endomorphism induced by $\phi$ on $\pi^{-1}\left(\mathcal{P}_\varphi\right)/T(G_\phi)$. We have that the connected component of the identity of $\pi^{-1}\left(\mathcal{P}_\varphi\right)/T(G_\phi)$ is given by $G_\phi/T(G_\phi)$, which is connected, since $G_\phi$ is connected. By Proposition \ref{prop-toral-components-trivial}, we have that $T\left(G_\phi/T(G_\phi)\right)$ is trivial. And we also have that $\psi\left(G_\phi/T(G_\phi)\right) = G_\phi/T(G_\phi)$, since $\phi(G_\phi) = G_\phi$. Hence, by Corollary \ref{cor-tg-trivial-no-li-yorke-pair}, it follows that $\psi^n$ has no Li-Yorke pair for some $n > 0$. Thus $h\left(\psi^n\right) = 0$ (see Proposition 6 of \cite{caldas-patrao:endomorfismos}), which implies that $h\left(\psi\right) = 0$. By Proposition \ref{prop-bowen-generalizado}, considering $\tau = \phi|_{T(G_\phi)}$, it follows that
\begin{eqnarray*}
h(\phi) & = & h\left(\phi|_H\right) \\
& = & h\left(\phi|_{\pi^{-1}\left(\mathcal{P}_\varphi\right)}\right) \\
& \leq & h(\psi) + h\left(\phi|_{T(G_\phi)}\right) \\
& = & h\left(\phi|_{T(G_\phi)}\right) \\
& \leq & h\left(\phi\right) 
\end{eqnarray*}
This implies that $h\left(\phi\right) = h\left(\phi|_{T(G_\phi)}\right)$. The last assertion holds, since there always exists a $\phi|_{T(G_\phi)}$-invariant probability measure of maximal entropy, as proved by Bowen in \cite{bowen-endomorphism}.
\end{proof}

Next we state an immediate corollary of the previous theorem.

\begin{corolario}\label{cor-entropia-endomorphism}
Let $G$ be a Lie group and $\phi: G \to G$ be a continuous automorphism. Then we have that
\begin{equation}
h\left(\phi\right) = h\left(\phi|_{T(G)}\right)
\end{equation}
where $T(G)$ is the maximal torus in the center of $G$. In particular, if $G$ is simply-connected, then $h\left(\phi\right) = 0$.
\end{corolario}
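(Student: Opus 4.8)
The plan is to read the corollary off directly from Theorem \ref{theo-entropia-endomorphism}; the only thing to do is identify the subgroup $G_\phi$ when $\phi$ is a continuous automorphism. First I would note that a continuous automorphism $\phi$ is in particular surjective and maps the identity component $G_0$ onto itself, so $\phi(G_0) = G_0$. Since $G_0$ is the largest connected subgroup of $G$, it is automatically the maximal connected subgroup with $\phi(G_\phi) = G_\phi$, that is, $G_\phi = G_0$. For a connected $G$ this just says $G_\phi = G$, and in that case $T(G_\phi)$ is precisely the maximal torus $T(G)$ in the center of $G$; substituting into Theorem \ref{theo-entropia-endomorphism} gives
\[
h(\phi) = h\left(\phi|_{T(G_\phi)}\right) = h\left(\phi|_{T(G)}\right).
\]

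For the last assertion, suppose $G$ is simply-connected; I would argue that $T(G)$ is then trivial. Indeed, the identity component $Z(G)_0$ of the center is a closed, connected, central subgroup of the simply-connected group $G$, hence itself simply-connected (in the bundle $G \to G/Z(G)_0$ the group $\pi_1(Z(G)_0)$ is a quotient of $\pi_2(G/Z(G)_0) = 0$), and being abelian it is isomorphic to a vector space, exactly as in Proposition \ref{vector-space}. A vector space has no nontrivial compact subgroup, so the maximal torus $T(G) \subset Z(G)_0$ is trivial, whence $h(\phi) = h(\phi|_{\{1\}}) = 0$.

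As for difficulty: there is essentially no obstacle here, since all the weight has already been carried by Theorem \ref{theo-entropia-endomorphism}. The only points needing a moment's care are (i) checking that an automorphism satisfies the surjectivity hypothesis and stabilizes $G_0$, so that the maximal $\phi$-invariant connected subgroup is exactly $G_0$ (equal to $G$ in the connected case), and (ii) recalling that the center of a simply-connected Lie group carries no torus. The corollary is genuinely immediate once Theorem \ref{theo-entropia-endomorphism} is in hand.
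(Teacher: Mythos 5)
Your identification $G_\phi = G_0$ for an automorphism, and the substitution $T(G_\phi) = T(G)$ when $G$ is connected, give exactly the reduction the paper has in mind when calling the corollary immediate from Theorem \ref{theo-entropia-endomorphism}. Your argument that $T(G)$ is trivial when $G$ is simply connected is also correct; it is a slightly different route from what the paper's machinery suggests (one could instead cite Proposition \ref{prop-toral-components-coincide} and observe that the nilpotent radical of a simply connected Lie group is simply connected and nilpotent, hence has trivial toral part, as is used inside the proof of Proposition \ref{prop-toral-components-trivial}), but the homotopy-exact-sequence argument is perfectly sound.

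The point you leave hanging is the disconnected case, which the corollary's hypothesis nominally allows. You correctly get $G_\phi = G_0$, so the theorem yields $h(\phi) = h\left(\phi|_{T(G_0)}\right)$; but you only equate this with $h\left(\phi|_{T(G)}\right)$ after restricting to connected $G$. For disconnected $G$ the two tori can genuinely differ, since $Z(G) \cap G_0$ can be much smaller than $Z(G_0)$, and then the statement as literally written fails. Concretely, take $G = \mathbb{T}^2 \rtimes_A \mathbb{Z}$ with $A \in \mathrm{GL}(2,\mathbb{Z})$ hyperbolic and $\phi(t,n) = (A(t),n)$: this is an automorphism, $G_\phi = G_0 = \mathbb{T}^2$, so $h(\phi) = h\left(\phi|_{T(G_0)}\right) = h(A) = \log\lambda > 0$, yet $Z(G)$ is finite, $T(G)$ is trivial, and $h\left(\phi|_{T(G)}\right) = 0$. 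So the corollary should either carry the hypothesis that $G$ is connected (as in the conjecture it resolves), or $T(G)$ should be read as $T(G_0) = T(G_\phi)$; your proof implicitly adopts the first reading, and you should say so rather than leaving the disconnected case silently unaddressed.
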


The following result was already proved by Andres Koropecki in \cite{koropecki}.

\begin{proposicao}\label{prop-torus-h-0-no-li-yorke-pair}
Let $T$ be a torus and $\phi: T \to T$ be a continuous and surjective endomorphism. Then $h(\phi) > 0$ if and only if $\phi^n$ has a Li-Yorke pair for each $n > 0$. 
\end{proposicao}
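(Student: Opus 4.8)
The plan is to combine Bowen's formula with the machinery on Li-Yorke pairs already developed, so that essentially no dynamics has to be redone by hand. The implication from positive entropy is immediate: if $h(\phi) > 0$, then, since the torus $T$ is compact, $h(\phi^n) = n\,h(\phi) > 0$ for every $n > 0$, and by the contrapositive of Proposition 6 of \cite{caldas-patrao:endomorfismos} (which states that a map with no Li-Yorke pair has zero topological entropy) each $\phi^n$ has a Li-Yorke pair.

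For the converse I would argue by contradiction, assuming $h(\phi) = 0$ and producing a single power $\phi^m$ with no Li-Yorke pair. Write $T \simeq \mathbb{R}^n/\mathbb{Z}^n$, so that $\phi$ lifts to an integer matrix $A \in M(n,\mathbb{Z})$ with eigenvalues $\lambda_1,\dots,\lambda_n$; surjectivity of $\phi$ forces $\det A \neq 0$, hence $|\det A| = \prod_i|\lambda_i|$ is a positive integer. Bowen's formula $0 = h(\phi) = \sum_{|\lambda|>1}\log|\lambda|$ gives $|\lambda_i| \le 1$ for all $i$, so $|\det A| = 1$; thus $A \in \mathrm{GL}(n,\mathbb{Z})$ and $|\lambda_i| = 1$ for every $i$. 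Each $\lambda_i$ is then an algebraic integer whose Galois conjugates, being roots of the irreducible $\mathbb{Q}$-factor of the characteristic polynomial of $A$ containing $\lambda_i$ and hence again among the $\lambda_j$, also lie on the unit circle; by Kronecker's classical theorem each $\lambda_i$ is a root of unity. Choosing $m > 0$ with $\lambda_i^m = 1$ for all $i$, the matrix $B := A^m \in \mathrm{GL}(n,\mathbb{Z})$ is unipotent, i.e. $N := B - I$ is a nilpotent integer matrix, say $N^{r+1} = 0$.

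It then remains to see that the unipotent toral automorphism $\phi^m$ has no Li-Yorke pair, and here I would iterate Proposition \ref{prop-li-yorke-G-H} along the flag $0 = W_0 \subseteq W_1 \subseteq \cdots \subseteq W_{r+1} = \mathbb{R}^n$ with $W_i = \ker N^i$. Each $W_i$ is a rational subspace invariant under $N$ (hence under $B$ and $B^{-1}$), so $W_i \cap \mathbb{Z}^n$ is a full-rank lattice in $W_i$, the subtorus $T_i := W_i/(W_i \cap \mathbb{Z}^n)$ of $T$ satisfies $\phi^m(T_i) = T_i$, and the compact group $T_{i+1}/T_i$ is again a torus. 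Since $N W_{i+1} \subseteq W_i$, the map $B = I + N$ acts as the identity on $W_{i+1}/W_i$, hence $\phi^m$ induces the identity on $T_{i+1}/T_i$, which has no Li-Yorke pair. Now one inducts on $i$: for the trivial group $T_0$ the statement is vacuous, and given that $\phi^m|_{T_i}$ has no Li-Yorke pair, Proposition \ref{prop-li-yorke-G-H} applied to the closed subgroup $T_i$ of $T_{i+1}$ (with the quotient projection onto $T_{i+1}/T_i$) yields that $\phi^m|_{T_{i+1}}$ has a Li-Yorke pair if and only if $\phi^m|_{T_i}$ does; therefore $\phi^m = \phi^m|_{T_{r+1}}$ has none. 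This contradicts the assumption that $\phi^n$ has a Li-Yorke pair for every $n > 0$, and so $h(\phi) > 0$.

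The routine ingredients are the identity $h(\phi^n) = n\,h(\phi)$ and the lattice bookkeeping showing that the $W_i$ are rational and that the successive quotients $T_{i+1}/T_i$ are tori carrying the trivial action. I expect the only genuinely nonroutine step to be the passage from $h(\phi) = 0$ to quasi-unipotence of $A$, which hinges on Kronecker's theorem about algebraic integers on the unit circle; once that is available, Proposition \ref{prop-li-yorke-G-H} takes care of the remainder without any further entropy estimate.
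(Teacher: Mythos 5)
Your proof is correct, and it takes a genuinely different (and arguably cleaner) route to the same reduction. The paper, like you, uses the determinant argument to see that $\phi$ is an automorphism with all eigenvalues on the unit circle, and then proceeds by induction on $\dim T$; but at each stage it cites a nonergodicity criterion from Denker--Grillenberger--Sigmund (Theorem 24.5 and Proposition 24.1) to extract a single root-of-unity eigenvalue, produces a one-dimensional rational invariant subtorus $S$, and quotients by $S$ to drop the dimension by one before applying Proposition~\ref{prop-li-yorke-G-H}. You instead apply Kronecker's theorem once to conclude that every eigenvalue is a root of unity, pass to a single power $A^m$ that is unipotent, and then run Proposition~\ref{prop-li-yorke-G-H} along the explicit rational flag $W_i = \ker(A^m - I)^i$, observing that $\phi^m$ acts as the identity on every successive quotient $T_{i+1}/T_i$. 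The two arguments are morally the same --- the ergodicity criterion in Denker et al.\ is itself a repackaging of Kronecker --- but yours is self-contained in that it does not lean on ergodic theory of toral automorphisms, and it discharges the whole reduction with one power rather than re-extracting a root of unity at each inductive step. The lattice bookkeeping you flag as ``routine'' is indeed the only thing to check carefully, and you handle it correctly: $W_i$ is cut out by integer linear equations, so $W_i \cap \mathbb{Z}^n$ is a full-rank primitive sublattice, $T_i$ is a genuine subtorus invariant under $\phi^m$, and $T_{i+1}/T_i$ is again a torus on which $\phi^m$ induces the identity.
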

\begin{proof}
If $h(\phi) > 0$, then $h\left(\phi^n\right) = nh(\phi) > 0$ for each $n > 0$. Hence $\phi^n$ has a Li-Yorke pair for each $n > 0$ (see Proposition 6 of \cite{caldas-patrao:endomorfismos}). On the other hand, assume that $h(\phi) = 0$. By Bowen's formula, it follows all the eigenvalues of $d\phi_1$ have absolute value equal to one. Hence the determinant of $d\phi_1$ has absolute value equal to one. Since the determinant of $d\phi_1$ is an integer, this implies that it is equal to $\pm 1$ and thus that $\phi$ is an automorphism. By induction on the dimension $\dim(T)$ of $T$, we prove that there is $n > 0$ such that $\phi^n$ has no Li-Yorke pair. If the $\dim(T) = 1$, then $\phi$ is the identity map or the inversion map, and thus we have the result. Now assume that $\dim(T) = d > 1$. By a Corollary of Theorem 24.5 of \cite{denker-et-al}, we have that $\phi$ is not ergodic. Then Proposition 24.1 of \cite{denker-et-al} implies that $A = d\phi_1 \in \mbox{Gl}(n,\mathbb{Z})$ has an eigenvalue which is a root of unity. Thus there is $m > 0$ and $v \in \mathbb{Z}^d$ such that $A^m v = v$. In fact, we have that $(A^m - I)v = 0$ is a linear system with rational coefficients and a nontrivial solution. Consider $S$ the connected subgroup of $T$ generated by $v$. Then $S$ is closed, its dimension $\dim(S) = 1$, and $\phi^m(S) = S$. Thus we can consider the following commutative diagram
\begin{displaymath}
    \xymatrix{
        T \ar[r]^{\phi^m} \ar[d]_\pi & T \ar[d]^\pi \\
        T/S \ar[r]_\varphi       & T/S }
\end{displaymath}
where $\pi$ is the canonical projection and $\varphi$ is the endomorphism induced by $\phi^m$ on the torus $T/S$. Since $\dim(T/S) = d-1$, by the induction hypothesis, there is $k > 0$ such that $\varphi^k$ has no Li-Yorke pair. Thus $\varphi^{k}$ and $\phi^{mk}|_S$ have no Li-Yorke pair. Hence, by Proposition \ref{prop-li-yorke-G-H}, it follows that $\phi^{mk}$ has no Li-Yorke pair.
\end{proof}

We conclude this paper generalizing the previous result to endomorphisms of general Lie groups.

\begin{proposicao}
Let $G$ be a Lie group and $\phi: G \to G$ be a continuous endomorphism such that $\phi(G_0) = G_0$. Then $h(\phi) > 0$ if and only if $\phi^n$ has a Li-Yorke pair for each $n > 0$. 
\end{proposicao}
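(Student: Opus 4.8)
The plan is to prove the two implications separately. The forward implication is immediate: if $h(\phi) > 0$, then $h(\phi^n) = n\,h(\phi) > 0$ for every $n > 0$, and hence $\phi^n$ has a Li-Yorke pair for each $n > 0$ (see Proposition 6 of \cite{caldas-patrao:endomorfismos}). The content is in the reverse implication, which amounts to showing that $h(\phi) = 0$ implies that $\phi^n$ has no Li-Yorke pair for some $n > 0$, and the strategy is to combine the main theorem with the torus case (Proposition \ref{prop-torus-h-0-no-li-yorke-pair}) and with Corollary \ref{cor-tg-trivial-no-li-yorke-pair}.

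So assume $h(\phi) = 0$. First I would note that, since $\phi(G_0) = G_0$ and every connected subgroup of $G$ is contained in $G_0$, we have $G_\phi = G_0$, and hence $T(G_\phi) = T(G_0)$. Next I would check that $\phi|_{T(G_0)}$ is a \emph{surjective} endomorphism of the torus $T(G_0)$: since $\phi|_{G_0}$ is a surjective continuous endomorphism of the connected Lie group $G_0$, its derivative $d(\phi|_{G_0})_1$ is a surjective and hence bijective endomorphism of the Lie algebra, that is, an automorphism, which therefore preserves the center; this forces $\phi$ to restrict to a surjective endomorphism of the connected component $Z(G_0)_0$ of the center of $G_0$, and, writing $Z(G_0)_0$ as the product of a vector space by $T(G_0)$ and observing that the induced endomorphism on the vector-space factor is a linear isomorphism, one concludes that $\phi\left(T(G_0)\right) = T(G_0)$. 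By Theorem \ref{theo-entropia-endomorphism} we have $h\left(\phi|_{T(G_0)}\right) = h(\phi) = 0$, so Proposition \ref{prop-torus-h-0-no-li-yorke-pair} yields $n_1 > 0$ such that $\left(\phi|_{T(G_0)}\right)^{n_1}$ has no Li-Yorke pair. Since a Li-Yorke pair of $f^k$ is automatically one of $f$ (because $(f^k)^m = f^{km}$ and $km \to \infty$), the absence of Li-Yorke pairs passes to all powers, so every power of $\left(\phi|_{T(G_0)}\right)^{n_1}$ also has no Li-Yorke pair.

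Now I would set $\psi = \phi^{n_1}$, so that $\psi(G_0) = G_0$, $G_\psi = G_0$, and $\psi|_{T(G_0)}$ together with all its powers have no Li-Yorke pair. The main step is to quotient out the torus $T(G_0)$: being the maximal torus of the center of $G_0$, it is characteristic in $G_0$ and hence a closed normal subgroup of $G$ preserved by $\psi$. The identity component of $G/T(G_0)$ is $G_0/T(G_0)$, whose maximal central torus is trivial by Proposition \ref{prop-toral-components-trivial}; since any torus contained in the center of $G/T(G_0)$ is connected and central, it lies in that trivial group, so the maximal torus of the center of $G/T(G_0)$ is also trivial. The induced endomorphism $\overline{\psi}$ of $G/T(G_0)$ satisfies $\overline{\psi}\left((G/T(G_0))_0\right) = (G/T(G_0))_0$, so Corollary \ref{cor-tg-trivial-no-li-yorke-pair} provides $n_2 > 0$ such that $\overline{\psi}^{\,n_2}$ has no Li-Yorke pair. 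Applying Proposition \ref{prop-li-yorke-G-H} to $\psi^{n_2}$ with $H = T(G_0)$, and using that the induced map on $G/T(G_0)$ is $\overline{\psi}^{\,n_2}$, which has no Li-Yorke pair, and that the restriction $\psi^{n_2}|_{T(G_0)} = \left(\psi|_{T(G_0)}\right)^{n_2}$ also has no Li-Yorke pair, one concludes that $\psi^{n_2}$ has no Li-Yorke pair, that is, $\phi^{n_1 n_2}$ has no Li-Yorke pair, which completes the proof.

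The step I expect to be the main obstacle is this reduction via the quotient by $T(G_0)$ together with the bookkeeping of the powers: one must simultaneously verify that $\phi|_{T(G_0)}$ is surjective so that the torus result applies, that $G/T(G_0)$ satisfies the hypotheses of Corollary \ref{cor-tg-trivial-no-li-yorke-pair}, and that the power $n_1$ produced on the torus and the power $n_2$ produced on the quotient can be merged into the single power $n_1 n_2$ for which $\phi$ has no Li-Yorke pair.
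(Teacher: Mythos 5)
Your proof is correct and follows essentially the same route as the paper's: apply the main theorem, the torus case (Proposition \ref{prop-torus-h-0-no-li-yorke-pair}), Corollary \ref{cor-tg-trivial-no-li-yorke-pair} on the quotient $G/T(G_0)$, and then combine via Proposition \ref{prop-li-yorke-G-H} after passing to a common power. You add some welcome rigor that the paper leaves implicit: identifying $T(G_\phi)$ with $T(G_0)$ under the hypothesis $\phi(G_0)=G_0$, and verifying that $\phi$ restricts to a \emph{surjective} endomorphism of $T(G_0)$, which is needed to invoke Proposition \ref{prop-torus-h-0-no-li-yorke-pair}.
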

\begin{proof}
If $h(\phi) > 0$, then $h\left(\phi^n\right) = nh(\phi) > 0$ for each $n > 0$. Hence $\phi^n$ has a Li-Yorke pair for each $n > 0$. On the other hand, assume that $h(\phi) = 0$. By Theorem \ref{theo-entropia-endomorphism}, it follows that $h\left(\phi|_{T(G)}\right) = 0$. By Proposition \ref{prop-torus-h-0-no-li-yorke-pair}, there is $m > 0$ such that $\phi^m|_{T(G)}$ has no Li-Yorke pair. We can consider again the following commutative diagram
\begin{displaymath}
    \xymatrix{
        G \ar[r]^\phi \ar[d]_\pi & G \ar[d]^\pi \\
        G/T(G) \ar[r]_\varphi       & G/T(G) }
\end{displaymath}
where $\pi$ is the canonical projection and $\varphi$ is the endomorphism induced by $\phi$ on $G/T(G)$. By Proposition \ref{prop-toral-components-trivial}, we have that $T\left(G/T(G)\right)$ is trivial. Hence, by Corollary \ref{cor-tg-trivial-no-li-yorke-pair}, it follows that $\varphi^k$ has no Li-Yorke pair for some $k > 0$. Thus $\varphi^{mk}$ and $\phi^{mk}|_{T(G)}$ have no Li-Yorke pair. Hence, by Proposition \ref{prop-li-yorke-G-H}, it follows that $\phi^{mk}$ has no Li-Yorke pair.
\end{proof}

\end{document}